\newtheorem{thm}{Theorem}[section]
\newtheorem{lem}[thm]{Lemma}
\newtheorem{cor}[thm]{Corollary}
\theoremstyle{definition}
\newtheorem{definition}[thm]{Definition}
\theoremstyle{remark}
\numberwithin{equation}{section}
\renewcommand{\char}{\textnormal{char}}  
\renewcommand{\Vec}{\textbf{Vec}}
\newcommand{\sVec}{\textbf{sVec}}
\newcommand{\id}{\textnormal{id}}
\renewcommand{\Im}{\textnormal{Im}}
\newcommand{\Ker}{\textnormal{Ker}}
\newcommand{\Z}{\textnormal{Z}}
\newcommand{\ad}{\textnormal{ad}}
\newcommand{\gr}{\mathfrak{gr}}
\title{Superalgebra in characteristic 2}
\author{Aaron Kaufer}
\begin{document}
	
	\maketitle
	
	\begin{abstract}
		
		Following the work of Siddharth Venkatesh, we study the category $\sVec_2$. This category is a proposed candidate for the category of supervector spaces over fields of characteristic $2$ (as the ordinary notion of a supervector space does not make sense in charcacteristic $2$). In particular, we study commutative algebras in $\sVec_2$, known as $d$-algebras, which are ordinary associative algebras $A$ together with a linear derivation $d:A \to A$ satisfying the twisted commutativity rule: $ab = ba + d(b)d(a)$. In this paper, we generalize many results from standard commutative algebra to the setting of $d$-algebras; most notably, we give two proofs of the statement that Artinian $d$-algebras may be decomposed as a direct product of local $d$-algebras. In addition, we show that there exists no noncommutative $d$-algebras of dimension $\leq 7$, and that up to isomorphism there exists exactly one $d$-algebra of dimension $7$. Finally, we give the notion of a Lie algebra in the category $\sVec_2$, and we state and prove the Poincare-Birkhoff-Witt theorem for this category.

	\end{abstract}
	
	\tableofcontents
	
	\newpage
	
	\section{Introduction}
	
	The concept of superalgebra finds its origins in supersymmetry, a theory from particle physics that attempts to explain the behaviors of elementary particles such as bosons and fermions. Supersymmetry has applications, in particular, to both string theory and quantum mechanics.
	
	In supersymmetry, many objects arise as natural analogs of standard algebraic objects. For example, the analog of a vector space over a field $F$ is a vector superspace, which is a vector space $V$ over $F$ that can be decomposed as $V=V_0 \oplus V_1$ (the ``even" and ``odd" components of $V$). If $\char(F) = 2$, however, the notion of a vector superspace over $F$ makes no sense since the concept of parity is nonexistent. 
	
	When $\char(F) \neq 2$, the category $\Vec$ of vector spaces over $F$ and the category $\sVec$ of vector superspaces over $F$ both naturally form symmetric tensor categories. This allows for various algebraic structures, such as commutative algbras and Lie algebras, to be defined within these categories. In $\Vec$, the structures are just standard commutative algberas and Lie algebras, whereas in $\sVec$, the structures are commutative superalgberas and Lie superalgebras, respectively. The study of algebraic structures in the category $\sVec$ is known as superalgebra. Necessarily, the characteristic of the base field must not be $2$ in order for superalgebra to make sense.
	
	In [5], Siddharth Venkatesh proposes an candidate for the notion of a vector superspace over a field of characteristic $2$. In particular, if $\char(F)=2$, then he contructs the category $\sVec_2$ as the category of representations of a given Hopf algebra (namely, the Hopf algbera $D=F[d]/(d^2)$ with primitive $d$). 
	
	Concretely, we have the following definition:
	
	\begin{definition}
		Suppose $F$ is an algebraically closed field of characteristic $2$. Then, the category $\sVec_2$ is the category whose objects are vector spaces $V$ over $F$ equipped with a a linear map $d=d_V: V \to V$, called the differential, such that $d^2=0$. The morphisms of this category are linear maps which commute with the differential. (That is, a linear map $T : V \to W$ such that $T \circ d_V = d_W \circ T$.
		
		The category $\sVec_2$ can be made into a tensor category by setting the tensor product $\otimes$ to be the normal tensor product, and defining 
		$$d_{V \otimes W} = d_V \otimes 1_W + 1_V \otimes d_W$$
		where $1_V$ and $1_W$ are just the identity maps of $V$ and $W$ respectively. 
		
		Furthermore, $\sVec_2$ can be made into a symmetric tensor categroy by defining the commutativity map $c_{V,W}: V \otimes W \to W \otimes V$ by:
		$$ c(v \otimes w) = w \otimes v + d(w) \otimes d(v).$$
	\end{definition}
	
	Since $\sVec_2$ is a symmetric tensor category, we can define algebraic structures within it, which are, as Venkatesh suggests, essentially analogues of superalgebraic structures in characteristic $2$. In this paper, we define commutative algebras and Lie algebras in $\sVec_2$, and we generalize results from standard commutative and Lie algebra to this category.
	
	\section{Commutative Algebras in $\sVec_2$}
	
	\subsection{Definition}

	In this section, we give the definition of a commutative algebra in $\sVec_2$. In addition, we generalize results from standard commutative algebra, and we classify finite dimensional commutative algebras in $\sVec_2$ up to dimension $7$. We recall that the base field $F$ is assumed to be algebraically closed and have characteristic $2$.
	
	As in [3], a commutative algebra in $\sVec_2$ is an object $A \in \sVec_2$ together with a morphism $m: A \otimes A \to A$ which satisfies associativity:
	$$ m \circ (\id \otimes m) = m \circ (m \otimes \id)$$
	and commutativity:
	$$ m = m \circ c$$
	where here $c=c_{A,A}:A \otimes A \to A \otimes A$ is the commutativity map. 
	
	If we write $m(a \otimes b)$ as $ab$ for $a,b \in A$, and we recall that the commutativity map is defined by $c(a \otimes b) = b \otimes a + d(b) \otimes d(a)$, then we can rewrite the first axiom as the familiar associativity axiom:
	$$ (ab)c = a(bc)$$
	and the commutativity axiom becomes:
	$$ ab = ba + d(b)d(a).$$
	
	Furthermore, $d_{A \otimes A} = d \otimes 1 + 1 \otimes d$, so if $a,b \in A$, then $d(a \otimes b) = d(a) \otimes b + a \otimes d(b)$. Since multiplication is a morphism in $\sVec_2$, it must commute with the differential $d$, so we have $d(ab) = d(a)b + ad(b)$ for all $a,b \in A$.
	
	Then, as we can see, a commutative algebra in $\sVec_2$ is just an ordinary associative algebra $A$ over $F$ togther with a linear derivation $d:A \to A$ such that $ab=ba+d(b)d(a)$ for all $A$. 
	
	To avoid confusion, we will use the term ``commutative" to refer to the standard condition $ab=ba$, and we will use the term  ``$d$-commutative" to refer the condition $ab=ba+d(b)d(a)$. An algebra that is $d$-commutative is called a $d$-algebra. 
	
	\subsection{General Facts and Constructions}
	
	For this section, we assume that $A$ is a $d$-algebra over $F$, where $\char(F)=2$ and $F$ is algebraically closed. Much of the theory of $d$-algebras comes from studying two important subalgebras of $A$, in particular $\Ker(d)$ and $\Im(d)$.
	
	\begin{lem}
		Suppose $a \in A$. Then $d(a)^2=0$.
	\end{lem}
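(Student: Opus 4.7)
The plan is to apply the $d$-commutativity axiom $ab = ba + d(b)d(a)$ with the special choice $b = a$. Substituting, we get $a \cdot a = a \cdot a + d(a)d(a)$, and cancelling $a^2$ from both sides yields $d(a)^2 = 0$ immediately. So this is really a one-line consequence of the axiom, with no case analysis or auxiliary constructions needed.

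The only thing worth double-checking is that the cancellation is legitimate — but since $A$ is a vector space and we are adding additive inverses, this works over a field of any characteristic, and in particular in characteristic $2$ where the elements are their own additive inverses. No use is made of the derivation property $d(ab) = d(a)b + ad(b)$, of $d^2 = 0$, or of associativity; the statement is a formal consequence of $d$-commutativity alone applied on the diagonal.

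There is no real obstacle here. The lemma is essentially the characteristic-$2$ analogue of the classical fact that in a supercommutative superalgebra every odd element squares to an element whose double vanishes; here the ``odd-ness'' is encoded by $d$, and the element $d(a)$ plays the role of an odd element, with its square forced to be zero by the commutativity constraint at $b = a$. I would present the proof in a single display, noting that it explains why $\Im(d)$ will later behave like the ``odd part'' of $A$ and motivating subsequent study of $\Ker(d)$ and $\Im(d)$ as the main structural subalgebras.
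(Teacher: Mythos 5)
Your proof is correct and is exactly the paper's argument: set $b=a$ in the $d$-commutativity axiom and cancel $a^2$. No differences worth noting.
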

	\begin{proof}
		This follows from $d$-commutativity: $0 = a \cdot a - a \cdot a = d(a)d(a)$.
	\end{proof}
	
	\begin{lem}
		$\Im(d) \subset \Ker(d) \subseteq \Z(A)$ where $\Z(A)$ is the center of $A$. Furthermore, they are all subalgebras of $A$.
	\end{lem}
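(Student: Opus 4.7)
The plan is to unpack the three inclusions and three closure statements in turn, using only the three facts already available: $d^2 = 0$, the Leibniz rule $d(ab) = d(a)b + a\,d(b)$, and the $d$-commutativity identity $ab = ba + d(b)d(a)$.

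First, $\Im(d) \subseteq \Ker(d)$ is immediate: if $b = d(a)$, then $d(b) = d^2(a) = 0$. Next, $\Ker(d) \subseteq \Z(A)$ follows directly from $d$-commutativity: if $b \in \Ker(d)$, then for every $a \in A$ we have $ab = ba + d(b)d(a) = ba$, so $b$ is central.

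For the subalgebra claims, closure under addition is obvious in all three cases, so the content is closure under multiplication (and containing the unit, if we take ``subalgebra'' to mean unital). For $\Ker(d)$, the Leibniz rule immediately gives $d(ab) = 0$ when $d(a) = d(b) = 0$; moreover $1 \in \Ker(d)$ since $d(1) = d(1\cdot 1) = 2d(1) = 0$ in characteristic $2$. For $\Im(d)$, given two elements $d(x), d(y) \in \Im(d)$, the key observation is that $d\bigl(x\,d(y)\bigr) = d(x)d(y) + x\,d^2(y) = d(x)d(y)$, so the product lies again in $\Im(d)$. (Note that $\Im(d)$ is typically non-unital, which is fine — we only claim it is a subalgebra, not a unital one.) Finally, $\Z(A)$ is a subalgebra of any associative algebra by a standard check.

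The whole argument is bookkeeping; there is no real obstacle, only the small subtlety of remembering to invoke $d^2 = 0$ in the closure proof for $\Im(d)$, and the use of characteristic $2$ to kill $d(1)$.
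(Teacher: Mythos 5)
Your argument correctly handles the two inclusions $\Im(d) \subseteq \Ker(d)$ and $\Ker(d) \subseteq \Z(A)$ and the three subalgebra claims, and those parts match the paper's (terser) reasoning. However, you missed that the lemma asserts a \emph{strict} inclusion $\Im(d) \subsetneq \Ker(d)$: the paper's notation distinguishes $\subset$ from $\subseteq$ here deliberately, and its proof explicitly verifies $\Im(d) \neq \Ker(d)$. You established only $\Im(d) \subseteq \Ker(d)$.

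The fix is one line, using the immediately preceding lemma ($d(a)^2 = 0$ for all $a$): you already observed that $1 \in \Ker(d)$, but $1 \notin \Im(d)$ because every element of $\Im(d)$ squares to zero, whereas $1^2 = 1 \neq 0$. With that added, your proof is complete and essentially identical in method to the paper's.
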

	\begin{proof}
		All three sets are clearly subalgebras. $\Ker(d) \subseteq \Z(A)$ follows immediately from $d$-commutativity.
		
		$\Im(d) \subseteq \Ker(d)$ because $d^2=0$, and $\Im(d) \neq \Ker(d)$ because $1 \in \Ker(d)$ but $1^2 \neq 0$, so $1 \not \in \Im(d)$ by the previous lemma.
	\end{proof}
	
	\begin{lem}
		If $A$ is finite dimensional, then $\dim(\Im(d)) < \dim(\Ker(d))$.
	\end{lem}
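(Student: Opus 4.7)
The plan is to derive this as a nearly immediate consequence of the preceding lemma. That lemma already gives the containment $\Im(d) \subseteq \Ker(d)$, together with the fact that the containment is strict: the unit $1$ lies in $\Ker(d)$ since any derivation satisfies $d(1) = 0$, while $1 \notin \Im(d)$, because the very first lemma forces every element of $\Im(d)$ to square to zero whereas $1^2 = 1 \neq 0$. So the purely algebraic content, namely the proper containment $\Im(d) \subsetneq \Ker(d)$, is already in hand before I begin.

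The only remaining step is to convert a proper containment of subspaces into a strict inequality of dimensions, and this is exactly the place where the hypothesis that $A$ be finite-dimensional enters: in finite dimension, $U \subsetneq V$ forces $\dim U < \dim V$. Applied to our pair this yields $\dim \Im(d) < \dim \Ker(d)$. If desired, one can restate the conclusion in a slightly more suggestive way by combining it with rank–nullity $\dim A = \dim \Ker(d) + \dim \Im(d)$, which turns the inequality into $2\dim \Im(d) < \dim A$.

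I do not expect any serious obstacle. All of the algebraic content has been extracted in the two preceding lemmas; the finite-dimensionality assumption serves only to promote strict subspace containment to a strict comparison of dimensions, something that would of course fail for general subspaces of an infinite-dimensional space. Consequently the proof, when written out, will consist of a single short paragraph citing the earlier lemma and invoking this dimension principle.
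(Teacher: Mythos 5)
Your argument is correct and is exactly the one the paper leaves implicit: the preceding lemma establishes the strict containment $\Im(d) \subsetneq \Ker(d)$ (since $1 \in \Ker(d) \setminus \Im(d)$), and in finite dimensions a proper subspace has strictly smaller dimension. The paper states this lemma without proof precisely because it is this immediate consequence, so your write-up matches the intended reasoning.
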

	
	\begin{thm}
		\label{dim im >= 2}
		If $\dim(\Im(d)) \leq 2$ then $A$ is commutative.
	\end{thm}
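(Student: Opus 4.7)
The plan is to reduce the statement to showing that $I \cdot I = 0$, where $I := \Im(d)$. Indeed, by $d$-commutativity $ab - ba = d(b)d(a)$, so $A$ is commutative iff the product of any two elements of $I$ vanishes. I would split into cases on $\dim I \in \{0,1,2\}$.

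The cases $\dim I = 0$ (where $d = 0$) and $\dim I = 1$ are essentially immediate: in the latter, pick $I = Fx$ with $x^2 = 0$ (by the first lemma of this subsection), and any two elements of $I$ are scalar multiples of $x$, so their product is $0$.

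The substantive case is $\dim I = 2$. Choose a basis $x = d(a)$, $y = d(b)$ of $I$, noting $x^2 = y^2 = 0$ and $xy = yx$ since $I \subseteq \Z(A)$. The plan is to show $xy = 0$. The key observation, which I expect to be the main obstacle to spot, is that $xy$ itself lies in $I$: using $d^2 = 0$, I compute
\[
d\bigl(a\, d(b)\bigr) = d(a)\,d(b) + a\,d^2(b) = d(a)\,d(b) = xy,
\]
so $xy \in \Im(d) = I$. Hence we may write $xy = \alpha x + \beta y$ for some scalars $\alpha,\beta \in F$.

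With that in hand, I would finish by multiplying this relation by $x$ and by $y$: using $x^2 = y^2 = 0$ these give $\beta xy = 0$ and $\alpha xy = 0$ respectively. If either $\alpha$ or $\beta$ is nonzero, it is invertible in $F$ and forces $xy = 0$ directly; if both are zero, then $xy = 0$ from the decomposition itself. Either way $xy = 0$, so $I \cdot I = 0$ and $A$ is commutative. No delicate issue arises from $\char F = 2$ here — the argument uses only that $F$ is a field and the identities $x^2 = y^2 = 0$ coming from the lemma.
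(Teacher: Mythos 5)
Your proof is correct and is essentially the contrapositive repackaging of the paper's argument: both hinge on the same three facts, namely that $\Im(d)$ is closed under multiplication via $d(a)d(b) = d(a\,d(b))$, that elements of $\Im(d)$ square to zero, and a multiply-by-basis-elements computation to kill coefficients. The paper assumes $A$ is noncommutative and exhibits $\{d(a), d(b), d(a)d(b)\}$ as three linearly independent elements of $\Im(d)$, whereas you assume $\dim\Im(d)\leq 2$ and deduce $\Im(d)^2=0$; the two are the same calculation read in opposite directions.
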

	\begin{proof}
		We prove this in contrapositive form. Suppose that $A$ is noncommutative, so that there exists $a,b \in A$ such that $0 \neq [a,b] = d(a)d(b)$. Then, $d(a)d(b)=d(d(a)b) \in \Im(d)$, and the set $\{ d(a), d(b), d(a)d(b)\}$ is linearly independent over $F$, which proves that $\dim(\Im(d)) \geq 3$.
		
		To see that the given set in linearly independent, suppose $\alpha d(a) + \beta d(b) + \gamma d(a)d(b) = 0$ for $\alpha, \beta, \gamma \in F$. Then, mutliplying by $d(b)$ gives $\alpha d(a)d(b)=0$, so $\alpha=0$. Likewise, multiplying by $d(a)$ shows $\beta=0$, leaving us with $\gamma d(a)d(b)=0$. Thus, $\gamma=0$, so we see that $\{d(a), d(b), d(a)d(b)\}$ is linearly independent over $F$.
	\end{proof}
	
	\begin{cor}
		If $\dim(A) \leq 6$, then $A$ is commutative.
	\end{cor}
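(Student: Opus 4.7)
The plan is to combine the previous theorem with the rank--nullity theorem and Lemma 2.3, which restricts how large $\Im(d)$ can be relative to the total dimension. Specifically, Theorem~\ref{dim im >= 2} already handles the case $\dim(\Im(d)) \leq 2$, so it suffices to show that under the hypothesis $\dim(A) \leq 6$, the image of $d$ cannot have dimension $3$ or more.

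First I would invoke the standard linear-algebraic identity
\[
\dim(A) = \dim(\Ker(d)) + \dim(\Im(d)),
\]
which holds because $d$ is a linear endomorphism of the finite-dimensional space $A$. Combining this with the strict inequality $\dim(\Im(d)) < \dim(\Ker(d))$ from Lemma~2.3, I obtain
\[
\dim(A) > 2 \dim(\Im(d)),
\]
so $\dim(\Im(d)) < \dim(A)/2$.

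Next, I would suppose for contradiction that $A$ is noncommutative. By Theorem~\ref{dim im >= 2}, this forces $\dim(\Im(d)) \geq 3$, which in turn forces $\dim(\Ker(d)) \geq 4$ by Lemma~2.3, and therefore $\dim(A) \geq 7$. This contradicts the hypothesis $\dim(A) \leq 6$, so $A$ must be commutative.

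There is essentially no obstacle here: the corollary is a numerical consequence of the two preceding results. The only thing worth double-checking is the strictness in Lemma~2.3 (which is what eliminates the borderline case $\dim(\Im(d)) = 3$, $\dim(\Ker(d)) = 3$, $\dim(A) = 6$), but that is exactly the content of that lemma, proved using the fact that $1 \in \Ker(d) \setminus \Im(d)$.
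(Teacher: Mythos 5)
Your proof is correct and is essentially identical to the paper's: both use rank--nullity to write $\dim(A) = \dim(\Ker d) + \dim(\Im d)$, invoke Lemma~2.3 for $\dim(\Im d) < \dim(\Ker d)$ to conclude $\dim(\Im d) < \dim(A)/2 \leq 3$, and then apply Theorem~\ref{dim im >= 2}.
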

	\begin{proof}
		Because $\dim(\Im(d))+\dim(\Ker(d))=\dim(A) \leq 6$ and $\dim(\Im(d)) < \dim(\Ker(d))$, we see that $\dim(\Im(d)) < \frac{1}{2}\dim(A) \leq 3$. Thus, by the previous theorem, $A$ is commutative.
	\end{proof}
	
	After this corollary, it is natural to ask for the smallest example of a noncommutative $d$-commutative algebra. As it turns out $\dim(A)=7$ is the smallest example of such a $d$-algebra, and in order to construct an explicit example, we first develop the notion of a polynomial algebra in the category $\sVec_2$ in a manner that mirrors polynomial algebras over commutative rings. In particular, just as $F[x_1, \ldots, x_n]$ is a free commutative algebra generated by $\{ x_1, \ldots, x_n\}$, we wish to create some sort of free $d$-algebra generated by a finite set. The motivation for this definition is that we wish to adjoin $r$ $d$-commutative indeterminates $x_1,\ldots,x_r$ such that $d(x_i) = \xi_i \neq 0 $, and in addition we wish to adjoin $s$ indeterminates $y_1,\ldots,y_s$ such that $d(y_i)=0$ but $y_i \not \in \Im(d)$.
	
	\begin{definition}
		\label{Polynomial d-algebra def}
		The polynomial $d$-algbera, generated by $(r,s)$ indeterminates, denoted $P^r_s$, is defined as:
		$$ P_s^r := F[y_1,\ldots,y_s,\xi_1,\ldots,\xi_r] \langle x_1, \ldots, x_r\rangle \big / I$$
		where $I$ is the ideal generated by elements of the form $x_ix_j-x_jx_i-\xi_i \xi_j$ for $1 \leq i,j \leq r$. Here $R\langle x_1, \ldots, x_r\rangle$ denotes the free noncommutative algebra over $R$ generated by $\{ x_1, \ldots, x_r\}$.
	\end{definition}
	
	\begin{definition}
		Suppose $P=P_s^r$. Then we define the map $d=d_P:P \to P$ as the linear map such $d(x_i)= \xi_i$, $d(\xi_i)=0$, $d(y_i) = 0$, and we extend $d$ to products of elements by $d(ab)=ad(b)+d(a)b$.
	\end{definition}
	\begin{thm}
		Suppose $P=P_s^r$. The map $d=d_P$ is well defined and gives $P$ the structure of a $d$-algebra.
	\end{thm}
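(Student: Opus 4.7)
The plan is to construct $d$ in stages: first lift to a free algebra, descend to the quotient, and then verify $d^2 = 0$ and $d$-commutativity. Let $R = F[y_1,\ldots,y_s,\xi_1,\ldots,\xi_r]$ and $\tilde P = R\langle x_1,\ldots,x_r\rangle$. The universal property of free associative algebras yields a unique $F$-linear derivation $\tilde d : \tilde P \to \tilde P$ with $\tilde d(y_i) = 0$, $\tilde d(\xi_j) = 0$, and $\tilde d(x_k) = \xi_k$: one extends to monomials via the Leibniz rule and then $F$-linearly.

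To pass to $P = \tilde P/I$, I need $\tilde d(I) \subseteq I$, and since $\tilde d$ is a derivation it suffices to check that $\tilde d$ annihilates the generators $x_ix_j - x_jx_i - \xi_i\xi_j$ of $I$. This is a short computation using the centrality of each $\xi_k$ in $\tilde P$, which holds because the $\xi_k$ lie in the base commutative ring $R$. To see $d^2 = 0$ on $P$, note that in characteristic $2$ the square of any derivation is again a derivation, since $d^2(ab) = d^2(a)b + 2d(a)d(b) + ad^2(b) = d^2(a)b + ad^2(b)$; and $d^2$ vanishes on every generator of $P$, so it vanishes everywhere.

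The main obstacle is $d$-commutativity, $ab - ba = d(b)d(a)$ for all $a, b \in P$. My plan is to define
\[ S = \{ b \in P : ab - ba = d(b)d(a) \text{ for all } a \in P \} \]
and show that $S$ is an $F$-subalgebra of $P$ containing every generator, which forces $S = P$. That each generator lies in $S$ is a finite case analysis using the defining relation $x_ix_j - x_jx_i = \xi_i\xi_j = d(x_j)d(x_i)$ together with the centrality of $y_i$ and $\xi_j$ in $P$. Closure of $S$ under addition and scalar multiplication is immediate. For closure under multiplication, given $b, b' \in S$, the general commutator identity $[a, bb'] = [a,b]b' + b[a,b']$ combined with the hypotheses on $b$ and $b'$ reduces the claim to the identity $d(a)b' = b'd(a)$, which in turn follows by applying the defining property of $S$ to $b'$ with the element $d(a)$, using $d^2(a) = 0$. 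The non-obvious point --- and why this step is the heart of the argument --- is that $d$-commutativity must be closed in the correct slot: closing $S$ under products in the second argument of the bracket meshes cleanly with the nilpotence $d^2 = 0$, whereas closing in the first argument would seem to require $\Im(d) \subseteq \Z(P)$, a fact that has not yet been established at this stage.
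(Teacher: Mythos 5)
Your route to $d$-commutativity via the set $S = \{b \in P : [a,b] = d(b)d(a) \text{ for all } a \in P\}$ is genuinely different from the paper's, which instead restricts (by linearity) to monomials $a_1\cdots a_k$ and $b_1\cdots b_l$ in the $x_i$'s and inducts on $k+l$. Your closure-under-products computation is correct and attractive: pulling $[d(a),b']=d(b')d^2(a)=0$ out of $b'\in S$ applied at the element $d(a)$ is exactly the right move, and it makes the role of $d^2=0$ transparent. The gap is in the step ``each generator lies in $S$.'' That claim is not a finite case analysis. To show $x_k\in S$ you must verify $[a,x_k]=\xi_kd(a)$ for \emph{every} $a\in P$. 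Checking the relation on pairs of generators $(g,g')$ only gives $[g,g']=d(g')d(g)$; promoting this to arbitrary $a$ in the first slot requires precisely a closure argument in the $a$-slot, which your setup deliberately postpones. As written, you have only placed the $y_i$'s and $\xi_j$'s in $S$ (those really are in $S$, being central with vanishing differential), and they do not generate $P$.

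The fix is short, and your worry about the $a$-slot requiring $\Im(d)\subseteq\Z(P)$ is too pessimistic for this particular instance. Fix $k$ and set $\phi_k(a):=ax_k-x_ka-\xi_kd(a)$. A direct computation gives $\phi_k(ab)=\phi_k(a)b+a\phi_k(b)+(a\xi_k-\xi_ka)d(b)$, so $\phi_k$ is a derivation once one knows $\xi_k$ is central --- and $\xi_k$ is central because it sits in the commutative base ring $R$, a fact available \emph{before} $d$-commutativity. Since $\phi_k$ vanishes on $y_i$, $\xi_j$, and $x_j$ (the last by $x_jx_k-x_kx_j=\xi_j\xi_k$), it vanishes identically, giving $x_k\in S$. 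In other words, the auxiliary set $T_k=\{a:[a,x_k]=\xi_kd(a)\}$ closes in the $a$-slot for essentially the same reason $S$ closes in the $b$-slot; the relevant central element is $d(x_k)=\xi_k$, whose centrality is unconditional, rather than the as-yet-unavailable centrality of general $d(b)$. Add this second layer and your argument goes through.
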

	\begin{proof}
		Set $R= F[y_1,\ldots,y_s,\xi_1,\ldots,\xi_r] \langle x_1, \ldots, x_r\rangle$. Since $d$ is clearly well defined on $R$, it suffices to show that the ideal $I$ is closed under $d$. For this, we note:
		$$d(x_ix_j-x_jx_i-\xi_i\xi_j) = d(x_ix_j)-d(x_jx_i)-d(\xi_i\xi_j) = x_i\xi_j+\xi_ix_j-x_j\xi_i-\xi_jx_i = 0.$$
		Thus, let $\alpha_{i,j}=x_ix_j-x_jx_i-\xi_i\xi_j$. Then for any product $a \cdot \alpha_{i,j} \cdot b$ with $a,b \in R$, we have $$d(a \cdot  \alpha_{i,j} \cdot b)=d(a) \cdot \alpha_{i,j} \cdot b + a \cdot \alpha_{i,j}  \cdot d(b).$$ Since any element in $I$ is a finite sum of elements of the form $a \cdot \alpha_{i,j} \cdot b$, this shows that $I$ is closed under $d$, so $d$ induces a well defined map on $R/I$.
		
		To show that $d$ makes $P$ into a $d$-algebra, we must verify $d$-commutativity holds, and by linearity, it suffices to show that it holds for monomials of the form $a_1\ldots, a_k$ and $b_1 \ldots b_l$ with $a_i,b_i \in \{ x_1,\ldots, x_r\}$. We do induction on $l+k$. The case of $l+k=2$ follows directly from the definition of $P$. Now, consider:
		\begin{align*}
		(a_1a_2\cdots a_k) (b_1 \cdots b_l) &= a_1 \left( b_1 \cdots b_la_2\cdots a_k + d(b_1 \cdots b_l)d(a_2\cdots a_k) \right) \\
		&= a_1b_1\cdots b_la_2 \cdots a_k + a_1d(b_1 \cdots b_l)d(a_2\cdots a_k) \\
		&= \left(b_1 \cdots b_l a_1 + d(a_1)d(b_1\ldots b_l)\right)a_2 \cdots a_k + a_1d(b_1 \cdots b_l)d(a_2\cdots a_k) \\
		&= (b_1 \cdots b_l)(a_1a_2\cdots a_k) + d(a_1 \cdots a_k)d(b_1 \cdots b_l).
		\end{align*}
		Here we used the induction hypothesis on the first and third lines. 
	\end{proof}
	
	
	Now that we have developed the notion of a polynomial $d$-algebra, we may easily construct examples of noncommutative $d$-algebras. In particular, consider the algebra:
	$$ A = P^2_0 \big / \left( x_1^2, \ x_1x_2,\  x_2^2, \ \xi_1x_1,\  \xi_2x_2,\  \xi_1x_2-\xi_2x_1 \right).$$
	This algebra has a basis $\{ 1, \xi_1, \xi_2, \xi_1\xi_2, x_1, x_2, \xi_1 x_2\}$, so we have $\dim(A)=7$. Furthermore, $A$ is noncommutative because $x_1x_2=0$, but $x_2x_1=\xi_1\xi_2$. As we will see in Section 6, it turns out that this $d$-algebra is the only noncommutative $d$-algebra of dimension $7$, up to isomorphism.
	
	The definition of a $d$-algebra homomorphism is as expected:
	
	\begin{definition}
		Let $A$ and $A'$ be $d$-algebras with differentials $d$ and $d'$, respectively. Then a map $\phi : A \to A'$ is a $d$-algebra homomorphism if it is an $F$-algebra homomorphism and it commutes with the differentials. That is, $\phi(d(a))=d'(\phi(a))$ for all $a \in A$. (Or in other words, $\phi$ is an $F$-algebra homomorphism as well as a morphism in $\sVec_2$).
	\end{definition}
	
	From this definition, it is easy to see that the kernel of a $d$-algebra homormorphism must be closed under $d$, so this gives us the natural definition of an ideal in the category $\sVec_2$:
	
	\begin{definition}
		Let $I$ be a left (resp. right) ideal in $A$. Then $I$ is a left (resp. right) $d$-ideal if it is closed under $d$.
	\end{definition}
	
	Although we initially differentiate between left and right ideals because $A$ need not be commutative, we will see later all $d$-ideals are necessarily two sided, so the distinction is not necessary. Furthermore, if $I$ is two sided and closed under $d$, then we may form the quotient algebra $A/I$, and the map $d$ naturally induces a differential $d_{A/I}$ on $A/I$, making $A/I$ a $d$-algebra.
	
	Part of the importance of polynommial algebras in standard commutative algebra is that any finitely generated commutative algebra is isomorphic to a quotient of a polynomial algebra. An analogous statement holds in the category $\sVec_2$:
	
	\begin{thm}
		Suppose $A$ is a finitely generated $d$-algebra. Then $A$ is isomorphic to a quotient of the polynomial $d$-algebra $P_s^r$ for some $r,s$. 
	\end{thm}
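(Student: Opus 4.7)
The plan is to construct a surjective $d$-algebra homomorphism $\phi : P_s^r \to A$ for suitable $r$ and $s$, and then invoke the fact already noted in the excerpt that the quotient of $P_s^r$ by the $d$-ideal $\Ker(\phi)$ inherits the structure of a $d$-algebra. Since $A$ is finitely generated as a $d$-algebra, it admits a finite generating set; I partition this set into elements $x_1, \ldots, x_r$ with $d(x_i) \neq 0$ (writing $\xi_i' := d(x_i) \in \Im(d)$) and elements $y_1, \ldots, y_s$ with $d(y_j) = 0$. Closing under $d$, the $F$-subalgebra of $A$ generated by $\{x_1,\ldots,x_r,\xi_1',\ldots,\xi_r',y_1,\ldots,y_s\}$ is all of $A$.

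Next I define $\phi$ on generators by $\phi(x_i) = x_i$, $\phi(\xi_i) = \xi_i'$, and $\phi(y_j) = y_j$, and check that it is a well-defined $F$-algebra homomorphism. In $P_s^r$ the $\xi_i$ and $y_j$ lie in the commutative polynomial subring, so I must verify that their images commute with everything in $A$; this follows from Lemma 2.2, since $\xi_i' \in \Im(d) \subseteq Z(A)$ and $y_j \in \Ker(d) \subseteq Z(A)$. The only remaining relation in $P_s^r$ is $x_ix_j - x_jx_i = \xi_i\xi_j$, and under $\phi$ this becomes the identity $x_ix_j - x_jx_i = \xi_i'\xi_j'$, which holds in $A$ by $d$-commutativity combined with the centrality of the $\xi_k'$ (so that $d(x_j)d(x_i) = \xi_j'\xi_i' = \xi_i'\xi_j'$).

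To see that $\phi$ commutes with the differentials, observe that $\phi \circ d_P$ and $d_A \circ \phi$ agree on the generators $x_i, \xi_i, y_j$ by construction (using that $d_A(\xi_i') = 0$ since $\xi_i' \in \Im(d) \subseteq \Ker(d)$). The set of elements of $P_s^r$ on which the two maps agree is manifestly closed under $F$-linear combinations, and the Leibniz rule for $d_P$ and $d_A$ (together with $\phi$ being multiplicative) shows it is also closed under products. Hence it is a subalgebra containing the generators, so it is all of $P_s^r$. Finally, surjectivity of $\phi$ is immediate because its image contains an $F$-algebra generating set of $A$, and the first isomorphism theorem gives $A \cong P_s^r / \Ker(\phi)$ as $d$-algebras.

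The argument is mostly bookkeeping; the only substantive point is the compatibility of the defining relation $x_ix_j - x_jx_i - \xi_i\xi_j$ of $P_s^r$ with multiplication in $A$, which relies essentially on the inclusion $\Im(d) \subseteq Z(A)$ from Lemma 2.2. Everything else, including the extension of the relation $\phi \circ d = d \circ \phi$ from generators to all of $P_s^r$, is routine and follows the standard pattern for universal properties of free (or relatively free) algebras.
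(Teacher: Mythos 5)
Your proposal is correct and takes essentially the same approach as the paper: both construct the surjective $d$-algebra homomorphism $\phi : P^r_s \to A$ mapping $x_i$, $\xi_i$, $y_j$ to a partitioned generating set and the differentials $d(a_i)$, and both conclude via the first isomorphism theorem. You simply spell out the verification (using $\Im(d),\Ker(d) \subseteq \Z(A)$ and $d$-commutativity to check the relations, and a subalgebra argument for $\phi \circ d_P = d_A \circ \phi$) that the paper compresses into the phrase ``trivially clear.''
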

	\begin{proof}
		Since $A$ is finitely generated, we pay pick a generating set $\{a_1,a_2,\ldots,a_r, b_1,b_2,\ldots, b_s\}$ where we have arranged the generators so that $d(a_i) \neq 0$ and $d(b_i)=0$. Then, construct the homormorphism $\phi: P_s^r \to A$ by setting $\phi(x_i)=a_i$, $\phi(\xi_i)=d(a_i)$, and $\phi(y_i)=b_i$. It is trivially clear that $\phi$ is a surjective $d$-algebra homomorphism, so we have $A \cong P_s^r / \Ker(\phi)$.
	\end{proof}

	
	
	\subsection{Properties of $d$-Ideals}
	
	We devote this section to proving properties of $d$-ideals and showing that many of the standard commutative algebra theorems apply.
	
	\begin{thm}
		If $I$ is a left (resp. right) $d$-ideal, then it is also a right (resp. left) $d$-ideal. Thus, all $d$-ideals are two-sided.
	\end{thm}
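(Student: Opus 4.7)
The plan is to use $d$-commutativity directly. Suppose $I$ is a left $d$-ideal; I want to show that for any $x \in I$ and $a \in A$, the product $xa$ lies in $I$. By the defining relation $ab = ba + d(b)d(a)$ applied with the roles of $a$ and $b$ swapped appropriately, I obtain
$$ xa = ax + d(a)d(x). $$
Now I analyze each summand: the term $ax$ lies in $I$ because $I$ is a left ideal; the element $d(x)$ lies in $I$ because $I$ is closed under $d$; and therefore $d(a)d(x)$ lies in $I$, again because $I$ is a left ideal (left-multiplying $d(x) \in I$ by $d(a) \in A$). Consequently $xa \in I$, so $I$ is a two-sided ideal.

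The argument starting from a right $d$-ideal is completely symmetric: for $x \in I$ and $a \in A$ one writes $ax = xa + d(x)d(a)$ and notes that $xa \in I$ since $I$ is a right ideal, while $d(x) \in I$ by closure under $d$, so $d(x)d(a) \in I$ by right-ideal closure, giving $ax \in I$.

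I do not anticipate a real obstacle: the entire content of the statement is that the extra term in the $d$-commutator $[a,b] = d(a)d(b)$ automatically sits inside any $d$-closed one-sided ideal, which is immediate from the two hypotheses (one-sided ideal and $d$-closure) combined in the right order. The only thing to be slightly careful about is that $d(a)d(x)$ is treated as left-multiplication of $d(x) \in I$ by $d(a) \in A$ (rather than the other way around), which is exactly the one-sided closure we are given.
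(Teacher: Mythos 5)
Your proof is correct and follows essentially the same approach as the paper: rewrite $xa$ via $d$-commutativity as $ax + d(a)d(x)$, then observe that both terms lie in $I$ using the left-ideal property together with closure under $d$. The only difference is notation (you use $x \in I$, $a \in A$ where the paper uses $a \in I$, $r \in A$).
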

	\begin{proof}
		Suppose $I$ is a left $d$-ideal. Let $a \in I$ and $r \in A$. Then $ra \in I$, so by $d$-commutativity,
		$$ ar = ra+d(r)d(a).$$
		Since $I$ is closed under $d$, we see $d(a)\in I$ and consequently $ra+d(r)d(a) \in I$. Thus $ar \in I$, hence $I$ is a right $d$-ideal. The proof that right $d$-ideals are also left $d$-ideals is completely analogous.
	\end{proof}
	
	In particular, as we mentioned earlier, there is an induced differential $d_{A/I}$ on $A/I$ that gives it the structure of a $d$-algebra.
	
	\begin{thm}
		Let $M$ be a maximal left (resp. right) ideal of $A$. Then $\Im(d) \subseteq M$.
	\end{thm}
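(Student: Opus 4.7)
The plan is to argue by contradiction, exploiting the two key facts already established earlier in the paper: $\Im(d) \subseteq \Z(A)$ and $d(a)^2 = 0$ for every $a \in A$. These are exactly the two ingredients that will turn an arbitrary maximal left (or right) ideal into one containing $\Im(d)$.

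Suppose for contradiction that $M$ is a maximal left ideal and there exists $a \in A$ with $d(a) \notin M$. By maximality, the left ideal $M + A \cdot d(a)$ equals all of $A$, so we may write $1 = m + r\, d(a)$ for some $m \in M$ and $r \in A$. I would then multiply this equation on the right by $d(a)$ to get
\[
    d(a) \;=\; m\,d(a) + r\,d(a)^2 \;=\; m\,d(a),
\]
where the second equality uses the lemma $d(a)^2 = 0$. Now comes the crucial use of centrality: since $d(a) \in \Im(d) \subseteq \Z(A)$, we have $m\,d(a) = d(a)\,m$, and the latter lies in $M$ because $M$ is a left ideal. Thus $d(a) \in M$, contradicting our assumption.

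For the right ideal case the argument is essentially the same but slightly simpler. Assuming $M$ is maximal as a right ideal and $d(a) \notin M$, we write $1 = m + d(a)\, r$ with $m \in M$, and multiplying on the left by $d(a)$ gives $d(a) = d(a)\,m + d(a)^2\, r = d(a)\,m$. Since $M$ is a right ideal and $d(a) \in A$, we have $d(a)\, m \in M$ immediately, without even needing centrality. Again we reach the contradiction $d(a) \in M$.

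No step looks to be a real obstacle: the only mildly delicate point is remembering to invoke centrality in the left ideal case so that $m\,d(a)$ lands back in $M$, but this is precisely what the earlier lemma $\Ker(d) \subseteq \Z(A)$ was designed to provide. The proof is essentially just a two-line manipulation exploiting $d(a)^2 = 0$ together with maximality.
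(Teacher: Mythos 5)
Your argument is essentially identical to the paper's: write $1 = m + r\,d(a)$ using maximality, multiply by $d(a)$ and kill the $r\,d(a)^2$ term via $d(a)^2=0$, then use centrality of $d(a)$ to move it past $m$ so that the result lands in the left ideal $M$. The only difference is that you spell out the (symmetric) right-ideal case, which the paper leaves to the reader.
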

	\begin{proof}
		Suppose for contradiction that $d(a) \not \in M$ for some $a \in A$. Then, the ideal $Ad(a)+M$ contains the element $d(a)$, so $Ad(a)+M \supset M$, so $Ad(a)+M = A$. Thus, there exists $r \in A$ and $m \in M$ such that $rd(a)+m=1$. Multiplying on the right by $d(a)$ gives $md(a)=d(a)$. But $d(a) \in Z(A)$, so $d(a)=d(a)m \in M$.
	\end{proof}
	\begin{cor}
		All maximal ideals are $d$-ideals. In particular, they are all two sided.
	\end{cor}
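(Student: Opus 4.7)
The plan is to derive this as a direct consequence of the preceding theorem combined with the earlier fact that every one-sided $d$-ideal is automatically two-sided. The entire content of the corollary is already encoded in those two results: once we know $\Im(d) \subseteq M$ for a maximal one-sided ideal $M$, closure of $M$ under $d$ is automatic, because $d$ sends \emph{every} element of $A$ into $\Im(d)$ and hence into $M$.

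Concretely, I would take a maximal left ideal $M$ of $A$, the right-ideal case being completely symmetric. The preceding theorem gives $\Im(d) \subseteq M$. Then for any $a \in M$ (in fact for any $a \in A$) one has $d(a) \in \Im(d) \subseteq M$, so $M$ is closed under $d$, i.e., $M$ is a left $d$-ideal. Invoking the earlier theorem of this subsection that asserts every left $d$-ideal is simultaneously a right $d$-ideal, I conclude that $M$ is a two-sided $d$-ideal, which is exactly the claim.

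There is no real obstacle in this argument; the corollary is essentially a bookkeeping statement that stitches together the previous theorem and the two-sidedness result. The only point worth flagging is the interpretation of the word ``maximal'' in the statement: the corollary is nonvacuous precisely when read for maximal one-sided ideals, since it is their two-sidedness that is in question. No case analysis or computation beyond the two citations is required.
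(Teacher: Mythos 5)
Your argument is exactly the intended one: the preceding theorem gives $\Im(d) \subseteq M$, so $d(M) \subseteq \Im(d) \subseteq M$ makes $M$ a one-sided $d$-ideal, and the earlier result that one-sided $d$-ideals are two-sided finishes it. The paper leaves this corollary proofless precisely because this two-step stitching is the argument; you have reconstructed it correctly.
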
	
	\begin{thm}
		Suppose $A$ is finitely generated, and $M$ is a maximal ideal. Then $A/M \cong F$.
	\end{thm}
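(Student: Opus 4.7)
The plan is to reduce this to the classical Nullstellensatz by showing that the quotient $A/M$ is actually a commutative $F$-algebra, and then invoke Zariski's lemma.

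First, I would use the results from just above: by the previous corollary, every maximal ideal $M$ is automatically a two-sided $d$-ideal, so the quotient $B := A/M$ inherits a well-defined $d$-algebra structure with induced differential $\bar d$. Crucially, the preceding theorem gives $\Im(d) \subseteq M$, which means $\bar d$ is identically zero on $B$. Plugging $\bar d = 0$ into the $d$-commutativity relation $xy = yx + \bar d(y)\bar d(x)$ yields $xy = yx$, so $B$ is an ordinary commutative $F$-algebra.

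Next, I would check that $B$ is finitely generated as an $F$-algebra. If $\{a_1, \dots, a_r, b_1, \dots, b_s\}$ generates $A$ as a $d$-algebra, then $\{a_1, \dots, a_r, d(a_1), \dots, d(a_r), b_1, \dots, b_s\}$ generates $A$ as an $F$-algebra (since the $d$-algebra generated by a set is the ordinary algebra generated by that set together with the images of its elements under $d$, and $d^2 = 0$). The images of these elements generate $B$ as a commutative $F$-algebra.

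Finally, since $M$ is maximal as a two-sided ideal in $A$ and $B$ is commutative, $B$ has no nontrivial proper ideals, so $B$ is a field. By Zariski's lemma (the algebraic form of Hilbert's Nullstellensatz), a field that is finitely generated as an algebra over $F$ is a finite algebraic extension of $F$. Since $F$ is algebraically closed, this forces $B \cong F$.

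There is no real obstacle here: the content is entirely in the observation that $\Im(d) \subseteq M$ collapses the differential on the quotient, after which the theorem follows from a standard fact about finitely generated $F$-algebras.
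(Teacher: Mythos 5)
Your proposal is correct and follows essentially the same route as the paper: using $\Im(d) \subseteq M$ to force commutativity of $A/M$ (you phrase this as the induced differential $\bar d$ vanishing, the paper computes $ab - ba = d(a)d(b) \in M$ directly, but these are the same observation), and then invoking Zariski's lemma. The extra details you supply on the $d$-ideal structure of $M$ and on finite generation of $A/M$ as an $F$-algebra are implicit in the paper's argument.
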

	\begin{proof}
		To show that $A/M$ is a field, it suffices to show that it is commutative. To see this, note that for any $a,b \in A$, we have $ab-ba=d(a)d(b) \in M$ because $\Im(d) \subseteq M$. Then, because $F$ is algebraically closed	and $A/M$ is finitely generated, it follows that $A/M \cong F$ by Zariski's lemma.
	\end{proof}
	\begin{cor}
		If $A$ is finite dimensional and $M$ is a maximal ideal, then $\dim(M)=\dim(A)-1$.
	\end{cor}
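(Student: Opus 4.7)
The plan is to deduce this corollary directly from the preceding theorem with essentially no work. Since $A$ is finite dimensional over $F$, it is in particular finitely generated as an $F$-algebra, so the hypothesis of the previous theorem applies and yields $A/M \cong F$ as $F$-algebras. In particular, $\dim_F(A/M) = 1$.

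Next, I would invoke the standard linear-algebraic fact that for any subspace $M \subseteq A$ of a finite dimensional vector space, $\dim(A) = \dim(M) + \dim(A/M)$. Combining this with $\dim(A/M) = 1$ gives $\dim(M) = \dim(A) - 1$, as desired.

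There is no real obstacle here: the only thing worth noting is the passage from ``finite dimensional'' to ``finitely generated,'' which is immediate (a basis is a generating set). The substance of the argument lives entirely in the previous theorem, which in turn rested on Zariski's lemma and on the fact that $\Im(d) \subseteq M$ forces $A/M$ to be commutative.
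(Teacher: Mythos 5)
Your proof is correct and is exactly the intended deduction: the paper states this as an immediate corollary of the theorem $A/M \cong F$, and your argument (finite dimensional $\Rightarrow$ finitely generated, then $\dim(M) = \dim(A) - \dim(A/M) = \dim(A) - 1$) is the standard one the paper leaves implicit.
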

	
	
	
	
	
	Many of the regular properties of "ideal arithmetic" carry over nicely to $d$-algebras.
	
	\begin{thm}
		\label{coprime intersect}
		Let $I$ and $J$ be coprime d-ideals (i.e. $I+J=A$). Then $IJ=I \cap J$.
	\end{thm}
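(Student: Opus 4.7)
The plan is to mimic the classical proof of the analogous statement for commutative rings, the only new wrinkle being that the standard identity $IJ = JI$ (used freely in commutative algebra) must be reestablished from $d$-commutativity together with the hypothesis that $I$ and $J$ are $d$-ideals. The containment $IJ \subseteq I \cap J$ is immediate from the two-sidedness of $I$ and $J$ (already proven earlier in this section).

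For the reverse containment I would first prove the auxiliary claim that $IJ = JI$. Given $i \in I$ and $j \in J$, the $d$-commutativity relation gives
\[
 ji \;=\; ij + d(i)d(j).
\]
Because $I$ and $J$ are $d$-ideals we have $d(i) \in I$ and $d(j) \in J$, so both $ij$ and $d(i)d(j)$ are products of an element of $I$ with an element of $J$, hence lie in $IJ$. Thus $JI \subseteq IJ$, and the symmetric argument gives $IJ \subseteq JI$.

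Now invoke coprimality: choose $a \in I$, $b \in J$ with $a + b = 1$. For any $x \in I \cap J$, write $x = xa + xb$. Since $x \in J$ and $a \in I$ we have $xa \in JI$, and since $x \in I$ and $b \in J$ we have $xb \in IJ$. Combining with $JI = IJ$ we conclude $x \in IJ$, so $I \cap J \subseteq IJ$.

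The only genuine obstacle is the first step, where one must notice that the naive equality $ij = ji$ is not available and that the correction term $d(i)d(j)$ must be absorbed back into $IJ$; once one observes that the $d$-ideal hypothesis accomplishes exactly this, the rest of the proof is identical to the commutative case.
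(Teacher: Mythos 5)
Your proof is correct, and the central mechanism — produce $a+b=1$ via coprimality and use $d$-commutativity to absorb the wrong-side product into $IJ$ because both ideals are $d$-closed — is the same as the paper's. The one organizational difference is that you first isolate the auxiliary equality $IJ = JI$ as a lemma, whereas the paper writes $x = ax + bx$ and rewrites $bx = xb + d(x)d(b)$ inline without a separate lemma; interestingly, the fact $IJ = JI$ that you prove en route is exactly what the paper records immediately afterward as a stronger standalone theorem, so your version anticipates that result at the cost of a slightly longer proof of this one.
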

	\begin{proof}
		Because $I$ and $J$ are both $d$-ideals, hence two sided, we get that $IJ \subseteq I \cap J$. To show the reverse inclusion, suppose that $x \in I \cap J$. Because $I$ and $J$ are coprime, there exists $a \in I$ and $b \in J$ such that $a+b=1$. Thus, $ax+bx=x$. Clearly, as $a \in I$ and $x \in J$, we see $ax \in IJ$. In addition, $bx=xb+d(x)d(b)$. Because $x \in I$ and $b \in J$, and $I$ and $J$ are both closed under $d$, we get that $xb+d(x)d(b) \in IJ$. Thus, $x=ax+bx=ax+xb+d(x)d(b) \in IJ$, so $I \cap J \subseteq IJ$.
	\end{proof}
	
	One interesting consequence of this theorem is that if $I$ and $J$ are coprime $d$-ideals, then $IJ=I \cap J = J \cap I = JI$. As it turns out, this is just a special case of the following stronger theorem:
	
	\begin{thm}
		If $I$ and $J$ are $d$-ideals, then $IJ$ is a $d$-ideal and $IJ=JI$.
	\end{thm}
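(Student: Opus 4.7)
The plan is to prove both assertions by direct verification, using only the Leibniz rule $d(ab) = d(a)b + a d(b)$, the $d$-commutativity identity $ab = ba + d(b)d(a)$, and the fact that both $I$ and $J$ are closed under $d$.

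First I would show that $IJ$ is a $d$-ideal. Since $I$ and $J$ are both two-sided (by the earlier theorem that all $d$-ideals are two-sided), $IJ$ is already a two-sided ideal in the ordinary sense, and it remains only to check closure under $d$. An arbitrary element of $IJ$ is a finite sum $\sum_k a_k b_k$ with $a_k \in I$ and $b_k \in J$. Applying $d$ and using linearity together with the Leibniz rule gives
\[
d\!\left(\sum_k a_k b_k\right) = \sum_k d(a_k) b_k + \sum_k a_k d(b_k).
\]
Because $I$ is closed under $d$, each $d(a_k)$ lies in $I$, so $d(a_k) b_k \in IJ$; symmetrically $a_k d(b_k) \in IJ$. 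Thus the right-hand side lies in $IJ$, and $IJ$ is a $d$-ideal.

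Next I would show $IJ = JI$. It suffices to show $IJ \subseteq JI$, since the reverse inclusion follows by the same argument with the roles of $I$ and $J$ swapped. Take a generator $ab$ of $IJ$ with $a \in I$ and $b \in J$. By $d$-commutativity,
\[
ab = ba + d(b)\,d(a).
\]
The term $ba$ is in $JI$ by definition. For the second term, $J$ is closed under $d$ so $d(b) \in J$, and $I$ is closed under $d$ so $d(a) \in I$; hence $d(b)d(a) \in JI$. Therefore $ab \in JI$, and by linearity every element of $IJ$ lies in $JI$.

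There is no real obstacle here: once one recognizes that $d$-commutativity transforms products $ab$ into products of the form $ba$ plus a correction term $d(b)d(a)$ that is again a product of elements of $J$ and $I$ (using closure under $d$), both claims fall out immediately. The only subtle ingredient being used is precisely the fact that $I$ and $J$ are $d$-ideals rather than merely two-sided ideals, since the correction term in $d$-commutativity lives in $d(J) \cdot d(I)$.
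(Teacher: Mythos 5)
Your proof is correct and follows essentially the same route as the paper: the Leibniz rule gives closure of $IJ$ under $d$, and $d$-commutativity combined with the closure of $I$ and $J$ under $d$ gives $IJ \subseteq JI$ (with the reverse inclusion by symmetry). No meaningful differences from the paper's argument.
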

	
	\begin{proof}
		Suppose $x \in IJ$. Then, by the definition of ideal multiplication:
		$$ x=\sum_i a_ib_i \text{ where }a_i \in I \text{ and }b_i \in J.$$
		First, we recall that $d$ is a derivation, so we get:
		$$ d(x) = \sum_i \left( a_i d(b_i) + d(a_i)b_i \right).$$
		Because $I$ and $J$ are both closed under $d$, we see that $d(x) \in IJ$, so $IJ$ is a $d$-ideal.
		
		To prove that $IJ=JI$, we recall that by $d$-commutativity,
		$$ x= \sum_i a_i b_i=\sum_i \left( b_i a_i + d(b_i)d(a_i) \right) \in JI.$$
		Thus $IJ \subseteq JI$, and the same argument shows $JI \subseteq IJ$, so $IJ=JI$.
	\end{proof}
	
	\subsection{Structure Theory of Artinian $d$-Algebras}
	
	Part of the structure theory for Noetherian $d$-algebras was initiated in [5], where the author has proven that the Hilbert basis theorem, a classical result in commutative algebras, for the setting of $d$-algebras. In this section, we initate the structure theory of Artinian $d$-algebras. In particular, we generalize the classical statement that any commutative Artinian ring may be decomposed as a direct product of local rings by proving that the corresponding statement holds for $d$-algebras. We give two separate proofs of this fact. To begin, let $A$ be an Artinian $d$-algebra.
	
	The first proof we give is nearly identical to the standard proof given in [1]. In particular, we define the Jacobson radical of $A$ as $J=\textnormal{Jac}(A)=M_1 \cap M_2 \cap \cdots \cap M_k$, where the $M_i$ are the distinct maximal ideals of $A$ (there is a finite number because $A$ is Artinian). Since the $M_i$ are clearly pairwise coprime, we can use Theorem \ref{coprime intersect} to say that $J=M_1\cdots M_k$. We require the following theorem from commutative algebra, which also holds in the general noncommutative case (and hence in the case of $d$-algebras).
	\begin{thm}
		For some $m \geq 1$, we have $J^m=0$.
	\end{thm}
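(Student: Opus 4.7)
The plan is to mimic the classical proof for commutative Artinian rings, using the descending chain condition on $d$-ideals together with Nakayama's lemma. First, consider the descending chain of $d$-ideals
$$ J \supseteq J^2 \supseteq J^3 \supseteq \cdots, $$
each term being a $d$-ideal by the previous theorem. Since $A$ is Artinian this chain stabilizes, so there exists $m \geq 1$ with $I := J^m = J^{m+1} = \cdots$, and in particular $I^2 = I$. The goal is to show $I=0$; I argue by contradiction and assume $I \neq 0$.

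Consider the collection $\mathcal{S}$ of nonzero $d$-ideals $K$ with $IK \neq 0$. This is nonempty because $I \in \mathcal{S}$ (as $I \cdot I = I^2 = I \neq 0$), so Artinianness produces a minimal element $K_0 \in \mathcal{S}$. Pick $x \in K_0$ with $Ix \neq 0$ and form $L := Ax + Ad(x)$. Including $d(x)$ is what guarantees closure under $d$: for $a \in A$ one has $d(ax) = d(a)x + ad(x) \in L$ and $d(ad(x)) = d(a)d(x) \in L$ since $d^2=0$. Hence $L$ is a left $d$-ideal, which is automatically two-sided, and $L \subseteq K_0$ since $K_0$ contains $x$ and $d(x)$. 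Because $IL \supseteq Ix \neq 0$, minimality forces $L = K_0$; in particular $K_0$ is finitely generated as a left $A$-module by $x$ and $d(x)$. Applying minimality once more: $IK_0$ is a $d$-ideal contained in $K_0$, and $I(IK_0) = I^2 K_0 = IK_0 \neq 0$, so $IK_0 = K_0$.

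At this point Nakayama's lemma closes the argument: $K_0$ is a finitely generated left $A$-module with $IK_0 = K_0$ and $I \subseteq J = \textnormal{Jac}(A)$, hence $K_0 = 0$, contradicting $K_0 \in \mathcal{S}$. Therefore $I = J^m = 0$, as required. The main obstacle is the $d$-closure issue: the naive choice $Ax$ used in the commutative proof need not be stable under $d$, so a minimality argument carried out only within $d$-ideals could fail. Adjoining $d(x)$ to produce $L = Ax + Ad(x)$ neatly repairs this without spoiling finite generation, after which the usual noncommutative Nakayama argument goes through verbatim since $I$ lies in the Jacobson radical.
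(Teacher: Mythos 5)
Your argument is correct, but it does not match the paper, because the paper does not prove this theorem at all: it simply cites the known noncommutative version (reference [4]) with the remark that the result holds for arbitrary rings, so in particular for $d$-algebras. You, by contrast, actually supply a proof. Compared to the classical route, yours is also somewhat over-engineered: since $J^m=0$ is a purely ring-theoretic assertion and $A$ is Artinian as a ring, the standard noncommutative proof works with arbitrary left ideals (choose $K_0$ minimal among left ideals with $IK_0\neq 0$; then $Ix$ is already a left ideal, $I(Ix)=Ix\neq 0$, so $Ix=K_0$, giving $x=ix$ with $i\in J$ and hence $x=0$ because $1-i$ is a unit). There is no need to stay inside the lattice of $d$-ideals, so the $d$-closure difficulty you identify and the remedy of passing from $Ax$ to $Ax+Ad(x)$ are self-imposed. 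That said, the repair is clean and correct --- $Ax+Ad(x)$ is a left ideal closed under $d$ (via $d(ax)=d(a)x+ad(x)$ and $d^2=0$), hence a two-sided $d$-ideal by the paper's earlier theorem, and it is finitely generated --- and the noncommutative Nakayama then finishes the argument. What your version buys is a proof carried out entirely in the category $\sVec_2$, replacing a citation to an external reference with an internal argument; the cost is the extra bookkeeping you had to introduce.
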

	For a proof of the general noncommutative case, the reader is directed to [4]. We now recall that the Chinese remainder theorem holds in any ring with unity, regardless of commutativity. With this in mind, we can finally prove:
	\begin{thm}
		\label{local decomposition}
		$A$ may be written as a direct product of $k$ local $d$-algebras, where $k$ is the number of maximal ideals of $A$.
	\end{thm}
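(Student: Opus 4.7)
The plan is to imitate the classical commutative-algebra argument using the Chinese Remainder Theorem, relying on the results already established in this section: that all maximal ideals are two-sided $d$-ideals, that $IJ=JI=I\cap J$ for coprime $d$-ideals (Theorem \ref{coprime intersect}), and that $IJ=JI$ for arbitrary $d$-ideals.

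First I would let $M_1,\ldots,M_k$ be the distinct maximal ideals and set $J=\mathrm{Jac}(A)$. By iterated application of Theorem \ref{coprime intersect} to the pairwise coprime $d$-ideals $M_i$, one has $J=M_1\cap\cdots\cap M_k=M_1M_2\cdots M_k$, and the given theorem provides $J^m=0$ for some $m\geq 1$. The next step is to establish the coprimality of the higher powers, namely $M_i^m+M_j^m=A$ for $i\neq j$. For this, I would expand $A=(M_i+M_j)^{2m-1}$ as a sum of monomials in $M_i$ and $M_j$; using the identity $M_iM_j=M_jM_i$ proven in the previous section, each such monomial can be rearranged into the form $M_i^p M_j^q$ with $p+q=2m-1$, and pigeonhole forces $p\geq m$ or $q\geq m$, so each summand lies in $M_i^m+M_j^m$.

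With pairwise coprimality of the $M_i^m$ in hand, I would apply the noncommutative Chinese Remainder Theorem (valid since each $M_i^m$ is two-sided) to the diagonal map $A\to\prod_{i=1}^k A/M_i^m$. Its kernel is $\bigcap_i M_i^m$, which equals $\prod_i M_i^m=(M_1\cdots M_k)^m=J^m=0$ (using Theorem \ref{coprime intersect} inductively on the coprime $d$-ideals $M_i^m$, together with the rearrangement $M_iM_j=M_jM_i$ to identify the product $M_1^m\cdots M_k^m$ with $(M_1\cdots M_k)^m$). Hence the map is a ring isomorphism, and since each $M_i^m$ is a $d$-ideal the induced differentials on $A/M_i^m$ make the quotient maps $d$-algebra homomorphisms, so the isomorphism is in fact an isomorphism of $d$-algebras.

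It remains to show each factor $A/M_i^m$ is local. Any maximal ideal of $A/M_i^m$ corresponds to a maximal ideal of $A$ containing $M_i^m$. If $M_j\supseteq M_i^m$ for some $j\neq i$, then the coprimality just established gives $A=M_i^m+M_j^m\subseteq M_j+M_j=M_j$, a contradiction. Thus $M_i$ is the unique maximal ideal of $A$ containing $M_i^m$, so $A/M_i^m$ has the unique maximal ideal $M_i/M_i^m$ and is a local $d$-algebra. The main subtlety is the coprimality step $M_i^m+M_j^m=A$, since one cannot expand binomials freely in a noncommutative ring; the key observation that unlocks it is the previously established identity $M_iM_j=M_jM_i$ for $d$-ideals, which turns the noncommutative expansion into a commutative-style rearrangement.
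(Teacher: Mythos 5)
Your proof is correct and follows the same overall route as the paper's first proof of this theorem: pass to $J^m=0$, show the powers $M_i^m$ are pairwise coprime two-sided $d$-ideals, apply the noncommutative Chinese Remainder Theorem, and observe that each factor $A/M_i^m$ is local. The one place you diverge is the coprimality step $M_i^m+M_j^m=A$. The paper argues by contradiction: if $M_i^m+M_j^m$ sat inside a maximal ideal $M$, then element-wise primality of $M$ (which holds because $A/M$ is a field, as $\Im(d)\subseteq M$ forces $A/M$ to be commutative and simple) would give $M_i\subseteq M$ and $M_j\subseteq M$, contradicting distinctness. You instead expand $A=(M_i+M_j)^{2m-1}$ and use the established identity $M_iM_j=M_jM_i$ for $d$-ideals to collect each monomial into $M_i^pM_j^q$ with $p+q=2m-1$, so pigeonhole forces it into $M_i^m$ or $M_j^m$. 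Your version is the standard commutative-algebra argument transplanted verbatim, and it is more self-contained in the sense that it leans only on the ideal-level commutativity already proved rather than on element-wise primality of maximal ideals, which is the more delicate fact to justify in a noncommutative setting. Both are valid; the paper's is shorter once residue fields are in hand, yours makes the dependence on $IJ=JI$ more transparent. You also spell out the locality of $A/M_i^m$ in more detail than the paper, which simply asserts it.
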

	
	\begin{proof}
		We begin by remarking that all maximal ideals are $d$-ideals, so their multiplication is commutative. Thus, $$0=J^m=\left( M_1 M_2 \cdots M_k \right)^m = M_1^m M_2^m \cdots M_k^m$$
		In addition, for any two distinct maximal ideals $M_i$ and $M_j$, we know that $M_i^m$ and $M_j^m$ are coprime. This can be shown by noting that if $M_i^m + M_j^m \neq A$, then $M_i^m+M_j^m$ is contained in some maximal ideal $M$. Then, suppose $x \in M_i$ and $y \in M_j$. Then $x^m \in M_i^m \subseteq M$, and maximal ideals are prime, so $x \in M$. Likewise, $y^m \in M_j^m \subseteq M$, so $y \in M$. Hence, $M_i \subseteq M$ and $M_j \subseteq M$, which gives $M_i = M_j$, a contradiction. Thus, by Theorem \ref{coprime intersect} above, we see that $0=M_1^m M_2^m \cdots M_k^m = M_1^m \cap M_2^m \cap \cdots \cap M_k^m$.
		
		Hence, by the Chinese remainder theorem:
		$$ A \cong A/0 = A/\left(M_1^m \cap M_2^m \cap \cdots \cap M_k^m \right) \cong A/M_1^m \times A/M_2^m \times \cdots A/M_k^m. $$
		
		For each $i$, the algebra $A/M_i^m$ is a local $d$-algebra with maximal ideal $M_i/M_i^m$.
	\end{proof}
	
	We now give a second proof of this theorem, which exposes some very useful information about $\Ker(d)$ along the way. In particular, set $K=\Ker(d)$, and suppose $K$ has $k$ distinct maximal ideals. Then, as $K$ is commutative and Artinian, we may decompose $K$ uniquely as $K_1\times\cdots\times K_k$, where each $K_i$ is local. Let $e_i$ be the idempotent element in $K$ corresponding to the identity of $K_i$. Thus, $K =e_1K + \cdots + e_kK$ and $e_1+\cdots+e_k=1$. Furthermore, we have $e_iK \cong K_i$ so each $e_iK$ is local.
	
	\begin{thm}
		There is a bijective correspondence between maximal ideals of $K$ and maximal ideals of $A$.
	\end{thm}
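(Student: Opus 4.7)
The plan is to build the correspondence out of the central idempotents $e_1,\ldots,e_k \in K$ arising from the decomposition $K = K_1 \times \cdots \times K_k$. Because $K \subseteq \Z(A)$ and each $e_i \in K$, they are central in $A$ and satisfy $d(e_i)=0$, so we obtain a $d$-algebra decomposition $A = A_1 \oplus \cdots \oplus A_k$ where $A_i := e_i A$ and $\Ker(d|_{A_i}) = K_i$. Once each $A_i$ is shown to be a local $d$-algebra, the bijection follows by counting: the maximal ideals of $A$ will be exactly the ideals of the form $\bigl(\bigoplus_{j\neq i} A_j\bigr) \oplus N$ for $N$ maximal in some $A_i$, giving exactly $k$ of them, matching the $k$ maximal ideals of $K$.

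The crucial step is showing each $A_i$ is local. The key observation is that every idempotent $f \in A$ already lies in $K$: from $f^2 = f$, applying $d$ gives $d(f) = 2 f\, d(f) = 0$ in characteristic $2$. Consequently the idempotents of $A$ are exactly the subset sums of the $e_j$'s, so $A_i$ has only the trivial idempotents $0$ and $e_i$. Now $A_i$ is Artinian, so its Jacobson radical $J(A_i)$ is nilpotent and $A_i/J(A_i)$ is semisimple Artinian; since every maximal ideal $M$ of $A_i$ pulls back to a maximal ideal of $A$ with quotient $F$, we have $A_i/J(A_i) \cong F^s$ where $s$ counts the maximal ideals of $A_i$. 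If $s > 1$, then $F^s$ contains a nontrivial idempotent, which by idempotent lifting across the nil ideal $J(A_i)$ pulls back to a nontrivial idempotent of $A_i$, contradicting the previous observation. Therefore $s = 1$ and $A_i$ is local.

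With locality in hand, the correspondence is realized explicitly by $M \mapsto M \cap K$: any maximal ideal $M$ of $A$ must omit exactly one $e_i$ (otherwise $1 = \sum_j e_j \in M$), must contain all of $A_j$ for $j \neq i$, and restricts to the unique maximal ideal of $A_i$; consequently $M \cap K = \bigoplus_{j \neq i} K_j \oplus \mathfrak{m}_i$, which is the maximal ideal of $K$ associated to $K_i$, and this assignment is a bijection. The main obstacle is proving the locality of each $A_i$; once the idempotent-lifting argument is in place, both the bijection and the promised second proof of Theorem \ref{local decomposition} fall out immediately from the decomposition $A = A_1 \oplus \cdots \oplus A_k$.
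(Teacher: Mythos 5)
Your proof is correct, but it runs in the opposite logical direction from the paper's. The paper proves the bijection \emph{directly} and \emph{before} the $A \cong e_1A \times \cdots \times e_kA$ decomposition: given a maximal ideal $M$ of $K$ and two maximal ideals $M',M''$ of $A$ containing it, one shows $M' \cap K = M'' \cap K = M$, then uses the observation that for any $x \in A$ one has $d(x^2) = 2x\,d(x) = 0$, so $x^2 \in K$; hence $x\in M'$ gives $x^2 \in M' \cap K = M'' \cap K \subseteq M''$, and primality yields $x \in M''$, so $M'=M''$. The decomposition and locality of each $e_iA$ are then \emph{consequences} of this bijection (via the corollary that $A$ is local iff $K$ is). You instead establish the decomposition and the locality of each $e_iA$ first, and read off the bijection. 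Your crucial technical input is that every idempotent $f\in A$ satisfies $d(f)=2f\,d(f)=0$, hence lies in $K$ and is a subset sum of the $e_j$; combined with Artinian idempotent lifting over the nilpotent radical $J(A_i)$, this forces each $A_i=e_iA$ to be local. Both proofs ultimately rest on the characteristic--$2$ identity $2\,(\text{something})\,d(\text{something})=0$, but you apply it to idempotents where the paper applies it to squares. The paper's route is shorter and more elementary (no lifting machinery); yours buys the decomposition and locality simultaneously, which is cleaner if one's goal is the structure theorem. One minor point: you do not actually need $A_i/J(A_i) \cong F^s$ (which invokes the finitely-generated hypothesis from the earlier theorem about $A/M\cong F$); it suffices that $A_i/J(A_i)$ is a product of $s$ simple algebras indexed by the maximal two-sided ideals, so $s>1$ already yields a nontrivial central idempotent to lift. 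This phrasing avoids importing a hypothesis the current section doesn't state.
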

	\begin{proof}
		Let $M$ be a maximal ideal in $K$, and suppose $M'$ and $M''$ are maximal ideals of $A$ that contain $M$. Then, $M \subseteq M' \cap K \subset K$. Furthemore, $M' \cap K$ is clearly an ideal in $K$, and $M$ is maximal in $K$, so we must have $M = M' \cap K$. By the same logic, $M'' \cap K = M$. Now, suppose $x \in M'$. Then $d(x^2)=2xd(x)=0$, so $x^2 \in M' \cap K = M'' \cap K \subseteq M''$, and because maximal ideals are prime, we have $x \in M''$. Thus, $M' \subseteq M''$, and by symmetry, we have $M' = M''$. Hence, for each maximal ideal $M$ of $K$, there is a unique maximal ideal of $A$ that contains $M$. Furthermore, for each maximal ideal $M'$ of A, $M' \cap K$ is a maximal ideal of $K$ that is contained in $M'$, so the correspondence is bijective.
	\end{proof}
	\begin{cor}
		$A$ is local if and only if $K$ is local.
	\end{cor}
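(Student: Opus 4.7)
The plan is to read off this corollary directly from the bijection established in the preceding theorem. A ring (or $d$-algebra) is local precisely when it has exactly one maximal ideal, so once we have a bijection between the set of maximal ideals of $K$ and the set of maximal ideals of $A$, the two sets have the same cardinality, and in particular one has cardinality $1$ iff the other does.

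More concretely, suppose first that $K$ is local with unique maximal ideal $M$. By the bijection there is a unique maximal ideal $M'$ of $A$ containing $M$, and every maximal ideal of $A$ arises this way via $M'' \mapsto M'' \cap K$; since $M$ is the only maximal ideal of $K$ available as a preimage, $M'$ is the only maximal ideal of $A$, so $A$ is local. Conversely, if $A$ is local with unique maximal ideal $M'$, then $M' \cap K$ is a maximal ideal of $K$ by the previous theorem, and any maximal ideal $M$ of $K$ corresponds under the bijection to some maximal ideal of $A$, which must be $M'$; hence $M = M' \cap K$ is the unique maximal ideal of $K$.

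I do not anticipate any obstacle: the only thing one needs is that the correspondence $M \leftrightarrow M'$ from the preceding theorem really is a bijection of sets (which was proved there), together with the fact that $K$ always has at least one maximal ideal (it is a nonzero commutative ring with unity, so this is standard). No extra computation with the differential $d$ or with $d$-commutativity is needed at this stage, since all of that work has been absorbed into the previous theorem.
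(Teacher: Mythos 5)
Your proposal is correct and takes exactly the route the paper intends: the corollary is stated without proof because it follows immediately from the bijection between maximal ideals of $K$ and of $A$ established in the preceding theorem, which is precisely the observation you spell out. Nothing further is needed.
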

	
	We may now use this to give a simple proof of local decomposition:
	
	\begin{thm}
		$A$ may be decomposed as $A \cong e_1A \times \cdots \times e_kA$, where each $e_iA$ is local.
	\end{thm}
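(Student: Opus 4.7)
My plan is to use the central idempotents $e_1,\ldots,e_k$ of $K$ directly to cut $A$ into pieces, then invoke the previous corollary to upgrade ``local'' from $K$ to each piece.

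First, I would observe that each $e_i$ is automatically central in $A$: we have $e_i \in K \subseteq \Z(A)$ by Lemma 2.2 of the excerpt, so $e_iA = Ae_i$ is a two-sided ideal. Moreover, because $d(e_i)=0$, the Leibniz rule gives $d(e_ia)=e_id(a) \in e_iA$, so each $e_iA$ is a $d$-ideal of $A$. The relations $e_1+\cdots+e_k=1$ and $e_ie_j=0$ for $i \neq j$ then give the internal direct sum decomposition $A=e_1A \oplus \cdots \oplus e_kA$: every $a \in A$ decomposes as $a=e_1a+\cdots+e_ka$, and if $\sum_i e_ia_i=0$ then multiplying by $e_j$ gives $e_ja_j=0$. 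Since multiplication in $A$ respects this splitting (as the $e_i$ are central orthogonal idempotents) and $d$ preserves each summand, this is an isomorphism of $d$-algebras $A \cong e_1A \times \cdots \times e_kA$, where each factor $e_iA$ is a $d$-algebra with identity element $e_i$.

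It then remains to show that each $e_iA$ is local as a $d$-algebra. By the corollary immediately preceding the statement, it suffices to check that $\Ker(d|_{e_iA})$ is local. I claim this kernel is exactly $e_iK=K_i$. Indeed, if $x \in e_iA$ satisfies $d(x)=0$, then $x \in K$ and $x=e_ix \in e_iK$; conversely $e_iK \subseteq \Ker(d) \cap e_iA$ is clear. But $K_i=e_iK$ is local by construction of the decomposition $K=K_1 \times \cdots \times K_k$, so $e_iA$ is local, completing the proof.

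I do not expect a serious obstacle here: once one notices that the idempotents of $K$ are automatically central in $A$ and lie in $\Ker(d)$, the decomposition is essentially forced, and locality is outsourced to the previous corollary. The one subtle point worth stating carefully is the identification $\Ker(d|_{e_iA})=e_iK$, which uses both $e_i \in K$ and the idempotency $e_i^2=e_i$; this is the place where the second proof gains leverage over the first, since it ties the decomposition of $A$ directly to the already-understood commutative Artinian algebra $K$ rather than going through the Jacobson radical and the Chinese remainder theorem.
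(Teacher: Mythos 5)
Your proof is correct and follows essentially the same route as the paper's: decompose $A$ via the central orthogonal idempotents $e_i \in K$, identify $\Ker(d\vert_{e_iA})$ with $e_iK = K_i$, and invoke the corollary that $A$ is local iff $\Ker(d)$ is local. You spell out a few steps the paper leaves implicit (centrality of the $e_i$, that $e_iA$ is a $d$-ideal, and the verification of $\Ker(d_i)=e_iK$), but the argument is the same.
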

	\begin{proof}
		Clearly, because $e_1+\cdots+e_k=1$, we have $A = e_1A + \cdots + e_kA$. Then, because $e_ie_j=0$ for $i \neq j$, we get $e_iA \cap e_jA= \{0\}$ for $i \neq j$. Hence, we get that  $A \cong e_1A \times \cdots \times e_kA$.
		
		Now, let $d_i:e_iA \to e_iA$ denote the differential that $d$ induces on the $d$-algebra $e_iA$. Then, we have $\Ker(d_i) = e_iK$, so $\Ker(d_i)$ is local. Thus, by the previous lemma, $e_iA$ is local as well.
	\end{proof}

	\subsection{$d$-Algebras with Finite Defect}
	
	In this section, we deduce theorems on the structure of $d$-algebras based on a property that we shall call the defect of a $d$-algebra. We recall that $\Im(d)$ is a subalgebra of $\Ker(d)$, and moreover, for $v \in \Ker(d)$ and $d(a) \in \Im(d)$, we have $vd(a)=d(va) \in \Im(d)$, so $\Im(d)$ is in fact an ideal of $\Ker(d)$. Thus, we may form the quotient algebra $H(A):= \Ker(d) \big / \Im(d)$, which we shall call the \emph{Homology Algebra of A}.
	
	\newcommand{\deff}{\textnormal{def}}
	
	\begin{definition}
		Let $A$ be a $d$-algebra. If $H(A)$ is finite dimensional, then we define the \emph{defect} of $A$ as $\deff(A) := \dim  \left( H(A) \right)$, and we say that $A$ has finite defect. 
	\end{definition}
	
	In the case where $A$ is finite dimensional, $A$ clearly has finite defect, and furthermore we must have $\deff(A)=\dim(\Ker(d))-\dim(\Im(d))=\dim(A)-2\dim(\Im(d))$.
	
	\begin{thm}
		Suppose $A$ has finite defect, and $A \cong A_1 \times \cdots \times A_k$ where each $A_i$ is a $d$-algebra. Then $\deff(A)=\deff(A_1)+\cdots+\deff(A_k)$.
	\end{thm}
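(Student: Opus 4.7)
The plan is to exploit the fact that the differential on a finite direct product of $d$-algebras acts coordinatewise, and then simply take dimensions of the resulting decomposition of the homology algebra.

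First I would unpack what it means for $A \cong A_1 \times \cdots \times A_k$ to be an isomorphism of $d$-algebras. The projections $\pi_i : A \to A_i$ and the inclusions $\iota_i : A_i \to A$ (sending $a_i$ to the tuple with $a_i$ in the $i$-th slot and zeros elsewhere) are all $d$-algebra morphisms, so they commute with the respective differentials. Writing an element of $A$ as $(a_1, \dots, a_k)$, this forces $d_A(a_1, \dots, a_k) = (d_{A_1}(a_1), \dots, d_{A_k}(a_k))$.

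Next I would note that with $d_A$ acting coordinatewise, we immediately obtain
$$\Ker(d_A) = \Ker(d_{A_1}) \times \cdots \times \Ker(d_{A_k}), \qquad \Im(d_A) = \Im(d_{A_1}) \times \cdots \times \Im(d_{A_k}),$$
and passing to quotients yields an isomorphism of algebras
$$H(A) \cong H(A_1) \times \cdots \times H(A_k).$$
Since $A$ has finite defect by hypothesis, $H(A)$ is finite dimensional, and therefore each factor $H(A_i)$ is finite dimensional as well (each is a quotient of a subspace of $H(A)$, or alternatively embeds as a summand), so every $A_i$ has finite defect. Taking dimensions on both sides of the product decomposition gives
$$\deff(A) = \dim H(A) = \sum_{i=1}^k \dim H(A_i) = \sum_{i=1}^k \deff(A_i),$$
which is the desired identity.

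There is no real obstacle here; the only step that deserves explicit mention is verifying that $d_A$ is coordinatewise, since that is the feature of the product that the whole argument rests on. Once that is in hand, the rest is formal.
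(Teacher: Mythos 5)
Your proposal is correct and follows essentially the same route as the paper: observe that the differential acts coordinatewise on the product, deduce that $\Ker(d)$ and $\Im(d)$ decompose as products of the corresponding kernels and images, pass to the quotient $H(A) \cong H(A_1) \times \cdots \times H(A_k)$, and count dimensions. You spell out a bit more carefully why $d$ is coordinatewise (via the projections being morphisms in $\sVec_2$) and why each factor has finite defect, but the mathematical content is identical to the paper's argument.
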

	\begin{proof}
		Let $d_i$ be the differential of $A_i$. Then, $\Ker(d) \cong \Ker(d_1)\times\cdots\times\Ker(d_k)$ and $\Im(d) \cong \Im(d_1)\times\cdots\times\Im(d_k)$. Thus, 
		$$ \frac{\Ker(d)}{\Im(d)} \cong \frac{\Ker(d_1)}{\Im(d_1)}\times\cdots\times\frac{\Ker(d_k)}{\Im(d_k)}.$$
		The result follows by counting dimensions.
	\end{proof}
	
	\begin{cor}
		Suppose $A$ is an Artinian $d$-algebra with finite defect, and suppose that $A$ has $k$ maximal ideals. Then $k \leq \deff(A)$.
	\end{cor}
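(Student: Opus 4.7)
The plan is to reduce the statement to the basic observation that every nonzero $d$-algebra has defect at least $1$, using the local decomposition theorem together with the additivity of defect over direct products.

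First I would apply Theorem \ref{local decomposition}: since $A$ is Artinian with $k$ maximal ideals, we may write $A \cong A_1 \times \cdots \times A_k$ where each $A_i$ is a local $d$-algebra. Next, by the defect-additivity theorem just proved, $\deff(A) = \deff(A_1) + \cdots + \deff(A_k)$. In particular, since $\deff(A)$ is finite, so is each $\deff(A_i)$ (this is automatic from the isomorphism $H(A) \cong H(A_1) \times \cdots \times H(A_k)$, so no separate argument is needed).

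The heart of the proof is then to show $\deff(A_i) \geq 1$ for each $i$. For any nonzero $d$-algebra $B$ with differential $d_B$, the identity $1$ lies in $\Ker(d_B)$ (since $d_B$ is a derivation), but $1 \notin \Im(d_B)$: indeed, if $1 = d_B(b)$ for some $b$, then the lemma $d_B(b)^2 = 0$ would force $1 = 0$. Hence the class $[1]$ is a nonzero element of $H(B) = \Ker(d_B)/\Im(d_B)$, giving $\deff(B) \geq 1$. (Note that we do not actually need to invoke locality of $A_i$ here, only that $A_i$ is a nonzero $d$-algebra.) Applying this to each $A_i$ yields
\[
k = \sum_{i=1}^{k} 1 \leq \sum_{i=1}^{k} \deff(A_i) = \deff(A),
\]
which is the desired inequality.

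There is no real obstacle here: the result is a direct corollary of the two previous theorems together with the elementary fact $\deff(B) \geq 1$ for nonzero $B$. The only thing to be careful about is ensuring the additivity theorem applies, which amounts to checking that each factor $A_i$ inherits finite defect from $A$ — a one-line consequence of the product decomposition of $H(A)$.
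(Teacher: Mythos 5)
Your proof is correct and follows exactly the paper's argument: apply the local decomposition theorem, invoke additivity of defect, and conclude from $\deff(A_i)\geq 1$. Your explicit justification that $1\in\Ker(d)\setminus\Im(d)$ (via $d(b)^2=0$) is precisely the content of the paper's earlier lemma establishing the strict inclusion $\Im(d)\subsetneq\Ker(d)$, which the paper's proof simply cites.
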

	\begin{proof}
		By Theorem \ref{local decomposition}, we may write $A \cong A_1\times \cdots \times A_k$, where each $A_i$ is local. Furthermore, because $\Ker(d_i) \supset \Im(d_i)$, where the inclusion is strict, we have $\deff(A_i) \geq 1$. Thus, by the previous theorem, we get $\deff(A) = \deff(A_1) + \cdots +\deff(A_k) \geq k$.
	\end{proof}
	\begin{cor}
		Suppose $A$ is an Artinian $d$-algebra and $\deff(A)=1$. Then $A$ is local.
	\end{cor}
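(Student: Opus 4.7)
The plan is to derive this as an immediate consequence of the previous corollary, with only a small observation needed to rule out the degenerate case $k=0$. The previous corollary states that if $A$ is Artinian with finite defect and has $k$ maximal ideals, then $k \leq \deff(A)$. Plugging in $\deff(A)=1$ gives $k \leq 1$, so I just need to show $k \geq 1$, i.e.\ that $A$ has at least one maximal ideal.

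First I would observe that $A$ is nonzero: since $\deff(A) = \dim H(A) = 1$, we have $\Ker(d)/\Im(d) \neq 0$, so in particular $\Ker(d) \neq 0$, and since $1 \in \Ker(d)$ this is automatic as long as $A \neq 0$ --- but really the key point is that $H(A) \neq 0$ forces $A \neq 0$. Then, because any nonzero ring with unity has at least one maximal left ideal (by a standard Zorn's lemma argument, which in the Artinian case is even more immediate via the descending chain condition), we have $k \geq 1$.

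Combining $1 \leq k \leq \deff(A) = 1$ gives $k=1$, so $A$ has exactly one maximal ideal and is therefore local. There is no real obstacle here --- the work was already done in the dimension-counting corollary above. The only subtlety worth stating cleanly in the write-up is the justification that $A \neq 0$, which I would handle in one sentence by pointing to the nonvanishing of $H(A)$.
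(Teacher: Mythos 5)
Your proposal is correct and is exactly the argument the paper implicitly intends: the corollary is stated without proof immediately after the bound $k \leq \deff(A)$, and the only thing to add is that $A\neq 0$ (hence $k\geq 1$), which you handle cleanly via $H(A)\neq 0$. No difference in approach.
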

	
	
	\begin{thm}
		\label{def 1 structure}
		Suppose $A$ is finite dimensional, $\deff(A)=1$, and $\dim(\Im(d))=f$. Then, there exists a basis for $A$ of the form $\{1, v_1,\ldots, v_f,w_1,\ldots,w_f\}$, such that $v_i^2=w_i^4=0$. Furthemore, the set $\{v_1,\ldots,v_f,w_1,\ldots,w_f\}$ forms a basis for the unique maximal ideal of $M$.
	\end{thm}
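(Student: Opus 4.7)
The plan is to construct the basis directly. Since $A$ is a finite-dimensional Artinian $d$-algebra with $\deff(A)=1$, the preceding corollary makes $A$ local with unique maximal ideal $M$, and the identity $\deff(A) = \dim(A) - 2\dim(\Im(d))$ noted earlier gives $\dim(A) = 2f+1$; because $A/M \cong F$, we have $\dim(M) = 2f$. The $v_i$ will be chosen as a basis of $\Im(d)$, and each $w_i$ as a carefully chosen preimage of $v_i$ under $d$ that lies in $M$.

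The first step is to locate $\Ker(d)$: the containments $\Im(d) \subseteq \Ker(d)$ and $1 \in \Ker(d) \setminus \Im(d)$ (since $1^2 \neq 0$ rules out $1 \in \Im(d)$ by Lemma 2.2), together with $\deff(A) = 1$, force $\Ker(d) = F \cdot 1 \oplus \Im(d)$. Since $1 \notin M$ and $\Im(d) \subseteq M$, intersecting with $M$ gives $\Ker(d) \cap M = \Im(d)$. Now pick any basis $v_1, \ldots, v_f$ of $\Im(d)$; these automatically satisfy $v_i^2 = 0$ by Lemma 2.2. For each $i$, pick some $\tilde{w}_i \in A$ with $d(\tilde{w}_i) = v_i$, decompose $\tilde{w}_i = \alpha_i + w_i$ along $A = F \cdot 1 \oplus M$, and keep the component $w_i \in M$; since $d(1) = 0$, we still have $d(w_i) = v_i$. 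A quick linear independence check (apply $d$ to a putative relation to eliminate the $w_j$-coefficients, then use independence of the $v_i$ and the fact that $1 \notin M$) shows that $\{1, v_1, \ldots, v_f, w_1, \ldots, w_f\}$ is a basis of $A$, and $\{v_1, \ldots, v_f, w_1, \ldots, w_f\}$ a basis of $M$ by a dimension count.

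The final task is the identity $w_i^4 = 0$, which is the clean computation that justifies the whole setup. Since $w_i \in M$ and $M$ is an ideal, $w_i^2 \in M$; on the other hand, $d(w_i^2) = w_i d(w_i) + d(w_i) w_i = w_i v_i + v_i w_i = 2 v_i w_i = 0$ in characteristic $2$, because $v_i \in \Ker(d) \subseteq Z(A)$. Hence $w_i^2 \in M \cap \Ker(d) = \Im(d)$, so $w_i^2 = d(z_i)$ for some $z_i \in A$, and then $w_i^4 = (w_i^2)^2 = d(z_i)^2 = 0$ by Lemma 2.2. The main subtlety, rather than a genuine obstacle, is that the fourth-power identity depends on $w_i^2$ landing in $\Im(d)$, not merely in $\Ker(d)$; this in turn requires $w_i^2 \in M$, and hence the projection step is essential: an arbitrary preimage $\tilde{w}_i$ of $v_i$ would give $\tilde{w}_i^2 \in \Ker(d)$ but possibly outside $M$, and the desired relation could fail.
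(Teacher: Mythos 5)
Your proof is correct and follows essentially the same strategy as the paper: take $v_1,\ldots,v_f$ a basis of $\Im(d)$, adjust a $d$-preimage of each $v_i$ by a scalar so that the adjusted $w_i$ satisfies $w_i^2\in\Im(d)$, and then invoke Lemma 2.2 (elements of $\Im(d)$ square to zero) to get $w_i^4=0$. The only cosmetic difference is in how the scalar is located: the paper writes $z_i^2 = a_0 + \sum a_j v_j$ in $\Ker(d)=F\cdot 1\oplus\Im(d)$ and subtracts $\sqrt{a_0}$, while you project the preimage onto $M$ along $A=F\cdot 1\oplus M$ directly — these yield the same $w_i$, and your version sidesteps the square-root extraction and makes it immediate that $w_i\in M$ and hence that $w_i^2\in M\cap\Ker(d)=\Im(d)$.
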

	\begin{proof}
		Let $\{ v_1,\ldots,v_f\}$ be a basis for $\Im(d)$, and recall $\Im(d) \subseteq M$. Now, pick $\{z_1,\ldots,z_f\}$ such that $d(z_i)=v_i$.  Then, $z_1^2 \in \Ker(d)$, so we have:
		$$ z_1^2 = a_0 + a_1v_1 + \cdots + a_fv_f$$
		Thus, as $F$ is algebraically closed, we may pick some $\sqrt{a_0} \in F$ such that $(z_1+\sqrt{a_0})^4=0$, so we may replace $z_1$ with $w_1:=z_1+\sqrt{a_0}$ and note that $d(w_1)=v_1$. Likewise, we may replace each $z_i$ with a $w_i$ such that $w_i^4=0$ and $d(w_i)=v_i$. Since $\deff(A)=1$, we have $\dim(A)=2f+1$, so the set $\{1,v_1,\ldots,v_f,w_1,\ldots,w_f\}$ forms a basis of the given form. Since each $v_i$ and $w_i$ are nilpotent, they must all belong to the unique maximal ideal of $A$, so they must form a basis of the ideal.
	\end{proof}
	
	\subsection{The Case of Dimension 7}
	
	In this section, we classify all noncommutative $d$-algebras of dimension $7$. Thus, let $A$ be a noncommutative $d$-algebra such that $\dim(A)=7$. Thus, by Theorem \ref{dim im >= 2}, we must have $\dim(\Im(d)) \geq 3$, but $\dim(\Im(d)) < \dim(\Ker(d))$, so we must have $\dim(\Im(d))=3$ and $\dim(\Ker(d))=4$. Thus, $\deff(A)=1$, so $A$ is local. Let us suppose that $z_1$ and $z_2$ do not commute with each other, and let us set $v_1=d(z_1)$ and $v_2=d(z_2)$, so that $z_1z_2-z_2z_1=v_1v_2 \neq 0$. Then, as in the proof of Theorem \ref{dim im >= 2}, we see that $\{v_1,v_2,v_1v_2\}$ forms a basis for $\Im(d)$. As in the proof of \ref{def 1 structure}, we may choose $w_1$ and $w_2$ such that $w_1^4=w_2^4=0$ and $d(w_1)=v_1$ and $d(w_2)=v_2$. Finally, we note that $d(v_1w_2)=v_1v_2$, so the set $\{1,v_1,v_2,v_1v_2,w_1,w_2,v_1w_2\}$ forms a basis for $A$. For convinience, we set $v_3:=v_1v_2$ and $w_3:=v_1w_2$. Thus, $v_1v_3=v_2v_3=v_3^2=0$. In addition, $w_3$ is central, because $w_3x-xw_3=d(w_3)d(x)=v_3d(x)$, and we know that $v_3$ annahilates everything in $\Im(d)$.
	
	To aid our classification of $7$ dimensional $d$-algebras, we will define the following class of $d$-algebras as:
	$$ D(h,k,p) = P^2_0/(x_1^2-h\xi_1\xi_2, \ x_2^2-k\xi_1\xi_2, \ x_1x_2-p\xi_1\xi_2, \ \xi_1x_1, \ \xi_2x_2, \ \xi_1x_2-\xi_2x_1).$$
	Then the set $\{1, \ \xi_1, \ \xi_2, \ \xi_1\xi_2, \ x_1, \ x_2, \ \xi_1x_2\}$ forms a basis for $D(h,k,p)$, so it has dimension $7$. The objective of this section is to show that $A \cong D(0,0,0)$. To accomplish this, we will first show that $A \cong D(h,k,p)$ for some $h,k,p \in F$. We will then show that $D(h,k,p) \cong D(0,0,q)$ for some $q \in F$, and finally we will show that $D(0,0,q) \cong D(0,0,0)$.
	
	\begin{thm}
		There exist $h,k,p \in F$ such that $A \cong D(h,k,p)$.
	\end{thm}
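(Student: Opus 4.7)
The plan is to construct a surjective $d$-algebra homomorphism $\phi\colon D(h,k,p) \to A$ for suitable $h,k,p \in F$ by declaring $\phi(\xi_i) = v_i$ and $\phi(x_i) = w_i$, and then to conclude it is an isomorphism by dimension count (both algebras have dimension $7$). For $\phi$ to be well defined, all six defining relations of $D(h,k,p)$ must hold in $A$, and the bulk of the work is to modify the generators $w_1, w_2$ so this is the case. The permissible modifications are replacements of the form $w_i \mapsto w_i + u$ with $u \in \Ker(d)$, since these preserve $d(w_i) = v_i$; the coefficients of $u$ relative to the basis $\{1, v_1, v_2, v_3\}$ of $\Ker(d)$ are the free parameters to play with.

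First, I would arrange $v_1 w_1 = 0 = v_2 w_2$. Since $v_1 w_1 \in \Ker(d) \cap M = \langle v_1, v_2, v_3\rangle$, one can write $v_1 w_1 = \alpha v_1 + \beta v_2 + \gamma v_3$; left multiplication by $v_1$ together with $v_1^2 = 0$ forces $\beta = 0$, after which replacing $w_1$ by $w_1 + \alpha + \gamma v_2$ kills the remaining two terms. The analogous adjustment handles $v_2 w_2$. Next, a short computation with the derivation rule shows $v_2 w_1 - v_1 w_2 \in \Ker(d)$, and combining this with the relations just established with further left multiplication by $v_1, v_2$ reduces this difference to a scalar multiple $a v_3$; this can be absorbed by replacing $w_1$ with $w_1 + a v_1$, an adjustment that preserves both $v_1 w_1 = 0$ and $d(w_1) = v_1$. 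After this step, $w_3 = v_1 w_2 = v_2 w_1$ as required.

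Finally, I would verify that $w_1^2, w_2^2, w_1 w_2$ all lie in $F v_3$. Each is in $\Ker(d)$ (the last because $d(w_1 w_2) = v_1 w_2 + v_2 w_1 = 0$ now), so each has the form $c_0 + c_1 v_1 + c_2 v_2 + c_3 v_3$. Computations such as $v_1 \cdot w_1^2 = (v_1 w_1) w_1 = 0$, made using the relations just engineered, force $c_1, c_2$ to vanish in each case; $c_0$ is killed by the nilpotence argument $(w_1 w_2)^2 = w_1^2 w_2^2$, which reduces to a multiple of $v_3^2 = 0$, together with similar squaring identities for $w_i^2$. Setting $h,k,p$ so that $w_1^2 = hv_3$, $w_2^2 = kv_3$, $w_1 w_2 = pv_3$ then makes every defining relation of $D(h,k,p)$ hold, and $\phi$ descends to the claimed isomorphism.

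The main obstacle is the bookkeeping: each adjustment of $w_1$ or $w_2$ risks undoing a previously established relation, so the order of modifications and the precise coefficients used must be chosen so that each step changes only the coefficient it is meant to target, leaving all earlier relations intact.
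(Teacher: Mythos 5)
Your outline is correct and follows essentially the same route as the paper's proof. The paper first records the full multiplication table with unknown coefficients and then performs a single basis change $z_1 = w_1 + g_3 v_1 + a_3 v_2$, $z_2 = w_2 + b_3 v_1$ at the end, and maps out of $P^2_0$ rather than out of $D(h,k,p)$; you interleave the basis changes with the deductions and map from $D(h,k,p)$ directly, but the underlying computations are the same and both succeed. One small correction to the bookkeeping in your last step: writing $w_1^2 = c_0 + c_1 v_1 + c_2 v_2 + c_3 v_3$, left multiplication by $v_1$ gives $v_1 w_1^2 = c_0 v_1 + c_2 v_3$, which together with $v_1 w_1^2 = (v_1 w_1)w_1 = 0$ forces $c_0 = c_2 = 0$ (not $c_1 = c_2 = 0$); multiplication by $v_2$ then kills $c_1$. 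So $c_0$ is already eliminated by these multiplications, making the separate nilpotence step unnecessary; moreover, the identity $(w_1 w_2)^2 = w_1^2 w_2^2$ by itself only yields relations like $(c_0'')^2 = c_0 c_0'$ among the constant terms of $w_1 w_2$, $w_1^2$, $w_2^2$, which would not force them to vanish.
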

	\begin{proof}
		Let $\{ 1, \ v_1, \ v_2, \ v_3, \ w_1, \ w_2, \ w_3\}$ be the basis described above, and let $M$ denote the unique maximal ideal of $A$ (so $v_i,w_i \in M$). We begin by noting that $v_3w_3=v_1^2v_2w_2=0$, $v_1w_3=v_1^2w_2=0$, and $w_3^2=v_1^2w_2^2=0$. Furthermore, $v_2w_3=v_1v_2w_2=v_3w_2$.
		
		Then, $d(v_1w_1)=v_1^2=0$, so $v_1w_1 \in \Ker(d) \cap M$. Thus, for some $a_1,a_2,a_3 \in F$, we have $v_1w_1=a_1v_1+a_2v_2+a_3v_3$. Mutliplying by $v_1$ gives $0=a_2v_3$, so $a_2=0$. Multiplying by $v_2$ gives us $v_3w_1=a_1v_3$. Repeating the process for $v_2w_2$ gives us $v_2w_2 = b_1v_1+b_2v_2+b_3v_3$ for some $b_1,b_2,b_3 \in F$. Multiplying by $v_2$ gives us $0=b_1v_3$, so $b_1=0$. Multiplying by $v_1$ gives us $v_3w_2=b_2v_3$.
		
		Next, we note that $d(w_1^2)=0$, so $w_1^2 \in \Ker(d) \cap M$. Thus, for some $h_1,h_2,h_2 \in F$, we have $w_1^2 = h_1v_1 + h_2v_2 + h_3 v_3$. Then, multiplying by $v_1$ gives us:
		\begin{align*}
		v_1w_1w_1 &= h_2v_3 \\
		(a_1v_1+a_3v_3)w_1 &= h_2v_3 \\			
		a_1(a_1v_1+a_3v_3) + a_3a_1v_3 &= h_2v_3.
		\end{align*}
		Thus, by comparing coefficients of $v_1$, we get $a_1^2=0$, so $a_1=0$, and by comparing coefficients of $v_3$, we get $0=h_2$. Thus, $v_3w_1=a_1v_3=0$, and $v_1w_1=a_3v_3$.
		
		Now we repeat this process with $w_2^2$. Since $d(w_2^2)=0$, we have $w_2^2=k_1v_1+k_2v_2+k_3v_3$ for some $k_1,k_2,k_3 \in F$. Multiplying by $v_2$ and expanding like we did with $w_1^2$ gives us $b_2=0$ and $k_1=0$. Thus, $v_3w_2=b_2v_3=0$ (so therefore $v_2w_3=v_3w_2=0$), and $v_2w_2=b_3v_3$.
		
		Hence, we have shown that multiplication by $v_3$ annihilates every basis element (except $1$). 
		
		Now, we note that $d(v_2w_1)=v_3$, so we have $v_2w_1=g_1v_1 + g_2v_2 + g_3v_3 + w_3$ for some $g_1,g_2,g_3 \in F$. Multiplying by $v_1$ gives $0=g_2v_3$, so $g_2=0$. Multiplying by $v_2$ gives $0=g_1v_3$, so $g_1=0$. Thus $v_2w_1=g_3v_3+w_3$.
		
		Now, we compute $d(w_1w_2)=v_1w_2+v_2w_1=w_3+(g_3v_3+w_3)=g_3v_3$. Thus, for some $p_1, p_2, p_3 \in F$, we have $w_1w_2=p_1v_1 + p_2v_2 + p_3v_3 + g_3w_3$. Multiplying by $v_1$ gives us:
		\begin{align*}
		(v_1w_1)w_2=p_2v_3 \ \ \  \implies \ \ \   (a_3v_3)w_2=p_2v_3 \ \ \  \implies \ \ \  0=p_2v_3 \ \ \ \implies \ \ \ p_2=0.
		\end{align*}
		Multiplying by $v_2$ gives us:
		\begin{align*}
		w_1(v_2w_2)=p_1v_3 \ \ \  \implies \ \ \   w_1(b_3v_3)=p_1v_3 \ \ \  \implies \ \ \  0=p_1v_3 \ \ \ \implies \ \ \ p_1=0.
		\end{align*}
		Thus $w_1w_2=p_3v_3+g_3w_3$. By $d$-commutativity, $w_2w_1=w_1w_2+v_3$, so by combining these two equations, we get $w_2w_1=(p_3+1)v_3+g_3w_3$. Multiplying by $v_2$ gives:
		\begin{align*}
		w_2w_1v_2=0 \ \ \  \implies \ \ \   w_2(g_3v_3+w_3)=0 \ \ \  \implies \ \ \  w_2w_3=0 .
		\end{align*}
		
		In addition, we see $w_1w_3=(w_1v_1)w_2=a_3v_3w_2=0$. Thus, $w_3$ annhilates every basis element except $1$. We now return to $w_1^2=h_1v_1+h_3v_3$. Upon multiplication (on the right) by $w_2$, we get:
		\begin{align*}
		w_1(w_1w_2)=h_1v_1w_2 \ \ \ \implies w_1(p_3v_3 + g_3w_3)=h_1w_3 \ \ \ \implies 0=h_1w_3 \implies h_1=0.
		\end{align*}
		Repeating the process with $w_2^2=k_2v_2+k_3v_3$ (except multiplying by $w_1$ instead of $w_2$):
		\begin{align*}
		(w_1w_2)w_2=k_2v_2w_1 \ \ \ &\implies (p_3v_3 + g_3w_3)w_2=k_2(g_3v_3+w_3) \\
		\ \ \ &\implies 0=k_2g_3v_3+k_2w_3 \implies k_2=0.
		\end{align*}
		
		At the moment, our multiplication table is given by Table \ref{mtable1}. 
		
		\begin{table}[h]
			\centering
			\caption{Multiplication Table}
			\label{mtable1}
			\begin{tabular}{|c|c|c|c|c|c|c|}
				\hline
				& $v_1$    & $v_2$        & $v_3$ & $w_1$               & $w_2$           & $w_3$ \\ \hline
				$v_1$ & $0$      & $v_3$        & $0$   & $a_3v_3$            & $w_3$           & $0$   \\ \hline
				$v_2$ & $v_3$    & $0$          & $0$   & $g_3v_3+w_3$        & $b_3v_3$        & $0$   \\ \hline
				$v_3$ & $0$      & $0$          & $0$   & $0$                 & $0$             & $0$   \\ \hline
				$w_1$ & $a_3v_3$ & $g_3v_3+w_3$ & $0$   & $h_3v_3$            & $p_3v_3+g_3w_3$ & $0$   \\ \hline
				$w_2$ & $w_3$    & $b_3v_3$     & $0$   & $(p_3+1)v_3+g_3w_3$ & $k_3v_3$        & $0$   \\ \hline
				$w_3$ & $0$      & $0$          & $0$   & $0$                 & $0$             & $0$   \\ \hline
			\end{tabular}
		\end{table}
		
		We now wish to get rid of $a_3$, $b_3$, and $g_3$, and in order to accomplish this, we are going to strategically pick a new basis. In particular, we set $z_1=w_1+g_3v_1+a_3v_2$, and $z_2=w_2+b_3v_1$. Then, we have $\{ 1, v_1, v_2, v_3, z_1, z_2, w_3\}$ form a basis for $A$, and it has the properties we desire. In particular, we have $d(z_i)=v_i$, and we may go through and reexamine the multiplication table as follows:
		\begin{itemize}
			\item $ v_1z_1 = v_1(w_1+g_3v_1+a_3v_2)=v_1w_1+a_3v_3=0$.
			\item $ v_2z_2 = v_2(w_2+b_3v_1)=v_2w_2+b_3v_3=0$.
			\item $ v_1z_2=v_1(w_2+b_3v_1)=v_1w_2=w_3$.
			\item $ v_2z_1 = v_2(w_1+g_3v_1+a_3v_2)=v_2w_1+g_3v_3=g_3v_3+w_3+g_3v_3=w_3$.
			\item $z_1z_2=(w_1+g_3v_1+a_3v_2)(w_2+b_3v_1)=w_1w_2+g_3v_1w_2+a_3b_3v_3=(p_3+a_3b_3)v_3$.
			\item $z_1^2=(w_1+g_3v_1+a_3v_2)^2=w_1^2=h_3v_3$.
			\item $z_2^2=(w_2+b_3v_1)^2=w_1^2=k_3v_3$.
		\end{itemize}
		
		Thus, letting $h=h_3$, $k=k_3$, and $p=p_3+a_3b_3$, this set of relations is exactly the set that we want. In particular, we will show that $A \cong D(h,k,p)$.
		
		To see this, first set $P=P^2_0$. Then, define the $d$-algebra homomorphosm $\phi: P \to A$ by $\phi(x_1)=z_1$, $\phi(x_2)=z_2$, $\phi(\xi_1)=v_1$, and $\phi(\xi_2)=v_2$. This map is well defined by the $d$-commutativity of $A$ and is clearly surjective. Thus, it suffices to prove that the ideal $I=(x_1^2-h\xi_1\xi_2, \ x_2^2-k\xi_1\xi_2, \ x_1x_2-p\xi_1\xi_2, \ \xi_1x_1, \ \xi_2x_2, \ \xi_1x_2-\xi_2x_1)$ is equal to $\Ker(\phi)$. The calculations listed above readily verify that every generator of $I$ lies in $\Ker(\phi)$, so $I \subseteq \Ker(\phi)$. But it is easy to see that $P^2_0/I$ is of dimension $7$, so $\Ker(\phi)$ cannot strictly include $I$, for then $P^2_0/\Ker(\phi)$ would have dimension smaller than $7$, which contradicts $P^2_0/\Ker(\phi) \cong A$. Thus, $I= \Ker(\phi)$, so $A \cong P_0^2/I=D(h,k,p)$.
	\end{proof}
	\begin{thm}
		For all $h,k,p \in F$, there exists $q \in F$ such that $D(h,k,p) \cong D(0,0,q)$. 
	\end{thm}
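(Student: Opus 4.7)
The plan is to construct an explicit isomorphism by a two-step linear substitution in the generators $x_1, x_2$, and then invoke the universal-property/dimension-count argument used in the preceding theorem to convert matching relations into an isomorphism. The two steps mirror the asymmetry of the relations in $D(h,k,p)$: first I kill the coefficient of $x_1^2$ by adjusting $x_1$, then I kill the coefficient of $x_2^2$ by adjusting $x_2$; the coefficient of $x_1 x_2$ will change along the way and become the final parameter $q$.

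For the first step, I pick $\alpha \in F$ (to be chosen) and set $X_1 := x_1 + \alpha x_2$ and $X_2 := x_2$. Using that the $\xi_i$ are central with $\xi_i^2 = 0$, that $x_1 x_2 + x_2 x_1 = \xi_1\xi_2$ by $d$-commutativity, and that $\char(F)=2$, a direct expansion gives $d(X_1)d(X_2) = \xi_1\xi_2$ together with
\[
X_1^2 = (h + \alpha + \alpha^2 k)\,\xi_1\xi_2, \qquad X_2^2 = k\,\xi_1\xi_2, \qquad X_1 X_2 = (p + \alpha k)\,\xi_1\xi_2,
\]
while the auxiliary relations $d(X_i)X_i = 0$ and $d(X_1)X_2 = d(X_2)X_1$ also all hold by similar short expansions (the former uses that in characteristic $2$ one has $\xi_1 x_2 + \xi_2 x_1 = 0$ from the original relation $\xi_1 x_2 = \xi_2 x_1$). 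Since $F$ is algebraically closed, the quadratic $k\alpha^2 + \alpha + h = 0$ has a root; fixing $\alpha$ to be any such root makes $X_1^2 = 0$, so that $X_1, X_2$ now satisfy exactly the relations defining $D(0, k, p + \alpha k)$.

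For the second step, I apply the symmetric substitution $Y_1 := X_1$, $Y_2 := X_2 + k X_1$ to the algebra just obtained. Because $X_1^2 = 0$, the analogous calculation now gives $Y_1^2 = 0$, $Y_2^2 = (k + k)\xi_1\xi_2 = 0$, $Y_1 Y_2 = (p + \alpha k)\xi_1\xi_2$, and $d(Y_1)d(Y_2) = \xi_1\xi_2$; the auxiliary relations $d(Y_i)Y_i = 0$ and $d(Y_1)Y_2 = d(Y_2)Y_1$ transport unchanged from the previous step since $X_1^2 = 0$ cancels every new cross-term. Setting $q := p + \alpha k$, the pair $(Y_1, Y_2)$ therefore satisfies precisely the defining relation set of $D(0, 0, q)$.

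Finally, by the universal property of $P^2_0$ used in the preceding theorem, the assignment $x_i \mapsto Y_i$, $\xi_i \mapsto d(Y_i)$ extends to a surjective $d$-algebra homomorphism $\phi: D(0,0,q) \to D(h,k,p)$, and since both sides have dimension $7$, the map $\phi$ must be an isomorphism. The main obstacle is the solvability of the Artin--Schreier-type quadratic $k\alpha^2 + \alpha + h = 0$ in characteristic $2$: one cannot complete the square, so this step genuinely requires $F$ to be algebraically closed (when $k = 0$ it degenerates harmlessly to $\alpha = h$). Everything else is routine characteristic-$2$ bookkeeping.
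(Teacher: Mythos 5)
Your proof is correct, but it takes a genuinely different route from the paper's. The paper performs a single change of variables $u_1 = x_1 + \alpha x_2$, $u_2 = x_1 + \beta x_2$ using \emph{both} roots $\alpha, \beta$ of $kx^2 + x + h$, then defines $\phi(x_i) = \sqrt{p}\,d(u_i) + u_i$, arriving at $q = \alpha k$ in one step. You instead do a two-stage triangular substitution: first $X_1 = x_1 + \alpha x_2$ (one Artin--Schreier root) to kill $h$, then $Y_2 = X_2 + kX_1$ to kill $k$, getting $q = p + \alpha k$. Both use algebraic closedness only for the single quadratic $k\alpha^2 + \alpha + h = 0$, and both then appeal to the same universal-property-plus-dimension-count mechanism. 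Your version has two advantages as exposition: the triangular shape makes the cancellation $Y_2^2 = (k+k)\xi_1\xi_2 = 0$ transparent, and it avoids the $\sqrt{p}$ shift that the paper introduces but which in fact plays no role in that proof (every $\sqrt{p}$-contribution there vanishes identically, as one sees by inspecting the displayed calculations). The paper's version has the cosmetic virtue of treating $x_1$ and $x_2$ symmetrically. One small point worth adding to your write-up: surjectivity of the final map comes from the fact that the composite substitution $Y_1 = x_1 + \alpha x_2$, $Y_2 = kx_1 + (1+k\alpha)x_2$ has determinant $1$ over $F$, so $\{Y_1, Y_2\}$ still generate; you invoked this implicitly but it deserves a line.
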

	\begin{proof}
		If $h=k=0$, then $q=p$ and we are done, so we consider the case of $k \neq 0$ (The case of $h \neq 0$ is completely analogous). Then, consider the polynomial $f(x)=kx^2+x+h$, and let $\alpha$ and $\beta$ be the roots of $f(x)$ in $F$ (which we can extract because $F$ is algebraically closed). From this, we know that $\alpha+\beta=k^{-1}$, and $\alpha\beta=hk^{-1}$, so $k(\alpha + \beta)=1$ and $k\alpha\beta=h$. 
		
		For convinience, set $u_1=x_1+\alpha x_2$ and $u_2=x_1+\beta x_2$. Now, construct the $d$-algebra homomorphism $\phi:P_0^2 \to D(h,k,p)$ by setting $\phi(x_1)=\sqrt{p}d(u_1)+u_1$ and $\phi(x_2)=\sqrt{p}d(u_2)+u_2$. Then, $\phi(\xi_1)=d(u_1)=\xi_1 + \alpha \xi_2$ and $\phi(\xi_2)=d(u_2)=\xi_1 + \beta \xi_2$. We note that $\alpha + \beta = k^{-1} \neq 0$, so $\alpha \neq \beta$, so $\phi(x_1)$ and $\phi(x_2)$ are linearly independent, and consequently, it is easy to see that $\phi$ must be surjective.
		
		Now, we set $q=\alpha k$. We would like to show that $\Ker(\phi)$ is equal to $I=(x_1^2 \ x_2^2, \ x_1x_2-q\xi_1\xi_2, \ \xi_1x_1, \ \xi_2x_2, \ \xi_1x_2-\xi_2x_1)$, because then we would have $D(h,k,p) \cong P_0^2/I=D(0,0,q)$.
		
		To do this, we list out the calculations necessary:
		$$\phi(\xi_1x_1)=d(u_1)(\sqrt{p}d(u_1) + u_1) = u_1d(u_1)=(x_1+\alpha x_2)(\xi_1 + \alpha \xi_2)=\alpha x_1\xi_2 + \alpha x_2 \xi_1 = 0$$
		$$\phi(\xi_2x_2)=d(u_2)(\sqrt{p}d(u_2) + u_2) = u_2d(u_2)=(x_1+\beta x_2)(\xi_1 + \beta \xi_2)=\beta x_1\xi_2 + \beta x_2 \xi_1 = 0.$$
		\begin{align*}
		\phi(\xi_1x_2-\xi_2x_1) &= d(u_1)(\sqrt{p}d(u_2)+u_2) + d(u_2)(\sqrt{p}d(u_1)+u_1) = u_2d(u_1)+u_1d(u_2) \\
		&= d(u_1u_2)=d(x_1^2+\beta x_1x_2 + \alpha x_2x_1 + x_2^2)=d(\beta p \xi_1\xi_2 + \alpha(p+1)\xi_1\xi_2)=0.
		\end{align*}
		$$ \phi(x_1^2)=(\sqrt{p}d(u_1)+u_1)^2=u_1^2=(x_1+\alpha x_2)^2 = x_1^2+\alpha \xi_1\xi_2 + \alpha^2 x_2^2 = (h + \alpha + k\alpha^2)\xi_1\xi_2=0.$$
		$$ \phi(x_2^2)=(\sqrt{p}d(u_2)+u_2)^2=u_2^2=(x_1+\beta x_2)^2 = x_1^2+\beta \xi_1\xi_2 + \beta^2 x_2^2 = (h + \beta + k\beta^2)\xi_1\xi_2=0.$$
		\begin{align*}
		\phi(x_1x_2-\alpha k \xi_1\xi_2) &= (\sqrt{p}d(u_1)+u_1)(\sqrt{p}d(u_2)+u_2) - \alpha k d(u_1)d(u_2) \\
		&= pd(u_1)d(u_2) + \sqrt{p}(u_1d(u_2)+u_2d(u_1)) + u_1u_2 - \alpha k d(u_1)d(u_2))  \\
		&= pd(u_1)d(u_2) + u_1u_2 -\alpha kd(u_1)d(u_2) \\
		&= p (\alpha + \beta)\xi_1\xi_2 + (x_1+\alpha x_2)(x_1+ \beta x_2) - \alpha k (\alpha + \beta)\xi_1 \xi_2 \\ 
		&= p (\alpha + \beta)\xi_1\xi_2 + h\xi_1\xi_2 + \beta p\xi_1\xi_2 + \alpha (p+1)\xi_1\xi_2 +k\alpha\beta \xi_1\xi_2 - \alpha k (\alpha + \beta)\xi_1 \xi_2 \\ 
		&= h \xi_1\xi_2 + \alpha \xi_1\xi_2 + k\alpha\beta \xi_1\xi_2 - \alpha k (\alpha + \beta)\xi_1 \xi_2 \\
		&= h\xi_1\xi_2 + \alpha \xi_1\xi_2 + h \xi_1\xi_2 - \alpha \xi_1 \xi_2 = 0.
		\end{align*}
		
		Thus, all the generators of $I$ lay in $\Ker(\phi)$, so $I \subseteq \Ker(\phi)$, and it is easily checked that $\dim(P_0^2/I)=7$, so we must have $I=\Ker(\phi)$, and thus $D(0,0,q) \cong P_0^2/\Ker(\phi) = D(h,k,p)$.	
	\end{proof}
	\begin{thm}
		For all $q \in F$, $D(0,0,q) \cong D(0,0,0)$.
	\end{thm}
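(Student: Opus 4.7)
The plan is to mirror the construction used in the two preceding isomorphism theorems: build a surjective $d$-algebra homomorphism $\phi: P^2_0 \to D(0,0,q)$ whose kernel is exactly the ideal
$$I_0 := (x_1^2,\, x_2^2,\, x_1x_2,\, \xi_1 x_1,\, \xi_2 x_2,\, \xi_1 x_2 - \xi_2 x_1)$$
defining $D(0,0,0)$, and then conclude via $D(0,0,0) = P^2_0/I_0 \cong P^2_0/\Ker(\phi) \cong D(0,0,q)$. Since $F$ is algebraically closed of characteristic $2$, we may pick a square root $\sqrt{q} \in F$; the key idea is that the correct shift sends $x_i$ along $\xi_i$ itself (rather than along $\xi_{3-i}$), so that the shifting term is annihilated by $\xi_i$.

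Concretely, I would define $\phi$ on generators by
$$\phi(x_i) = x_i + \sqrt{q}\,\xi_i, \qquad \phi(\xi_i) = \xi_i \qquad (i = 1,2).$$
This is a well-defined $d$-algebra homomorphism for essentially free: the relation $x_ix_j - x_jx_i = \xi_i\xi_j$ in $P^2_0$ is automatically respected because $D(0,0,q)$ is itself a $d$-algebra, and $d(\phi(x_i)) = \xi_i = \phi(d(x_i))$. The six generators of $I_0$ are then verified to lie in $\Ker(\phi)$. The verifications $\phi(\xi_i x_i) = \sqrt{q}\xi_i^2 = 0$, $\phi(x_i^2) = \sqrt{q}(x_i \xi_i + \xi_i x_i) = 0$, and $\phi(\xi_1 x_2 - \xi_2 x_1) = 0$ are immediate from $\xi_i^2 = 0$, characteristic $2$, and $\xi_1 x_2 = \xi_2 x_1$ in $D(0,0,q)$. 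The only substantive computation is
$$\phi(x_1)\phi(x_2) = (x_1 + \sqrt{q}\xi_1)(x_2 + \sqrt{q}\xi_2) = q\xi_1\xi_2 + \sqrt{q}(\xi_2 x_1 + \xi_1 x_2) + q\xi_1\xi_2 = 0,$$
where the middle term collapses because $\xi_2 x_1 = \xi_1 x_2$, and the two $q\xi_1\xi_2$ terms cancel in characteristic $2$.

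To finish, observe that $\phi(x_i) + \sqrt{q}\phi(\xi_i) = x_i$, so $x_1, x_2, \xi_1, \xi_2$ all lie in $\operatorname{Im}(\phi)$, making $\phi$ surjective. Since $\dim_F(P^2_0/I_0) = 7 = \dim_F D(0,0,q)$, the containment $I_0 \subseteq \Ker(\phi)$ cannot be proper, forcing $\Ker(\phi) = I_0$ and yielding the desired isomorphism $D(0,0,0) \cong D(0,0,q)$. The only nonroutine step is guessing the right shift term: the previous theorem used linear combinations of both $\xi_1$ and $\xi_2$, but here a shift of the form $\phi(x_i) = x_i + c\xi_{3-i}$ would break $\phi(\xi_i x_i) = 0$ (since $\xi_i\xi_{3-i} \neq 0$ in the quotient), so one is forced to notice that the shift must be along $\xi_i$ itself, where $\xi_i \cdot \xi_i = 0$ absorbs the problematic terms.
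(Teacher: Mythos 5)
Your proposal is correct and is essentially identical to the paper's proof: the same shift $\phi(x_i) = x_i + \sqrt{q}\,\xi_i$, the same verification that the six generators of $I_0$ vanish under $\phi$, and the same dimension-count argument forcing $\Ker(\phi) = I_0$. The closing remark about why the shift must be along $\xi_i$ rather than $\xi_{3-i}$ is a nice piece of motivation not present in the paper, but the substance of the argument is the same.
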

	\begin{proof}
		We use the same strategy as before. Consider the map $\phi: P_0^2 \to D(0,0,q)$ defined by $\phi(x_1)=\sqrt{q}\xi_1 + x_1$ and $\phi(x_2) = \sqrt{q}\xi_2 + x_2$. Then $\phi(\xi_1)=\xi_1$ and $\phi(\xi_2)=\xi_2$. Clearly $\phi$ is surjective, so we wish to show that $\Ker(\phi)$ equals $I=(x_1^2, \ x_2^2, \ x_1x_2, \ \xi_1x_1, \ \xi_2x_2, \ \xi_1x_2-\xi_2x_1 )$. The necessary calculations are:
		\begin{itemize}
			\item $\phi(x_1^2)= (\sqrt{q}\xi_1+x_1)^2=x_1^2=0$.
			\item $\phi(x_2^2)= (\sqrt{q}\xi_2+x_2)^2=x_2^2=0$.
			\item $\phi(x_1x_2)= (\sqrt{q}\xi_1 + x_1)(\sqrt{q}\xi_2 + x_2) = q\xi_1\xi_2 + \sqrt{q}(\xi_1x_2 + \xi_2x_1) + x_1x_2 = 0$.
			\item $\phi(\xi_1x_1)= \xi_1(\sqrt{q}\xi_1 + x_1)=0$.
			\item $\phi(\xi_2x_2)= \xi_2(\sqrt{q}\xi_2 + x_2)=0$.
			\item $\phi(\xi_1x_2)=\xi_1(\sqrt{q}\xi_2 + x_2) = \sqrt{q}\xi_1\xi_2 + \xi_1x_2$.
			\item $\phi(\xi_2x_1)=\xi_2(\sqrt{q}\xi_1 + x_1) = \sqrt{q}\xi_1\xi_2 + \xi_2x_1 = \sqrt{q}\xi_1\xi_2 + \xi_1x_2 = \phi(\xi_1x_2)$.
		\end{itemize}
		
		Thus, by the same argument as before, we have $\Ker(\phi)=I$, and so $D(0,0,q) \cong P_0^2/I = D(0,0,0)$.
	\end{proof}
	
	Hence, we have shown that up to isomorphism, the only noncommutative dimension $7$ $d$-algebra is $D(0,0,0)$.
	
	\section{Lie Algebras in $\sVec_2$}
	
	\subsection{Definition}
	In this section, we give and motivate the definition of a Lie algebra in $\sVec_2$. In particular, have the following:
	
	\begin{definition} \label{lie alg def}
		A Lie algebra in $\sVec_2$ is an object $L \in \sVec_2$ together with a bilinear bracket operation $[,]:L \otimes L \to L$ which satisfies the following conditions:
		\begin{enumerate}
			\item $d$ is a derivation over $[,]$:
			$$ d[x,y] = [dx,y] + [x,dy].$$
			\item Antisymmetry:
			$$ [x,y] + [y,x] + [dy,dx] = 0 \text{ for all }x,y \in L.$$
			\item Jacobi Identity:
			$$ [x,[y,z]] + [y,[x,z]] + [dy,[dx,z]] = [[x,y],z] \text{ for all }x,y,z \in L.$$
			\item PBW requirement:
			$$ \text{if }dx=0\text{, then }[x,x]=0.$$
		\end{enumerate}
	\end{definition}
	
	To motivate this defintion, we recall that in a symmetric tensor category $\mathcal{C}$ over a field of characteristic $0$, a Lie algebra, as defined in [3], is an object $L \in \mathcal{C}$ together with a bracket morphism $\beta : L \otimes L \to L$ which satisfies anti-commutativity:
	$$ \beta \circ (\id + c) = 0$$
	and the Jacobi identity:
	$$ \beta \circ (\beta \otimes \id) \circ (\id + \sigma + \sigma^2) = 0$$
	where $c=c_{L,L}:L \otimes L \to L \otimes L$ is the commutativity map, and $\sigma:L \otimes L \otimes L \to L \otimes L \otimes L$ is the permutation $(123)$; more concretely $\sigma = (c \otimes \id ) \circ (\id \otimes c)$
	
	This definition suffices for when the base field has characteristic zero because in such cases, the Poincare-Birkhoff-Witt theorem holds (see [3]). However, the PBW theorem is known to fail for a variety of cases when the characteristic of the base field is positive. In [2], Pavel Etingof demonstrates that for any prime $p$, there exists a symmetric tensor category over a field of charactersitic $p$ for which the PBW theorem fails to hold.
	
	For example, in characteristic $2$, the PBW theorem fails to hold in the standard category $\Vec$. To ensure that PBW holds, the additional condition of $[x,x]=0$ for all $x$ is added to the definition of a Lie algebra in $\Vec$.
	
	Likewise, in characteristic $3$, the PBW theorem fails to hold in the category $\sVec$. For this case, the addition condition of $[[x,x],x]=0$ for all odd $x$ must be added to the definition of a Lie algebra in $\sVec$ in order to ensure that PBW holds.
	
	The definition of a Lie algebra in $\sVec_2$ follows along the same lines. It is defined through the axioms given above, which are taken from [3], and an additional axiom is imposed to ensure that PBW holds. 
	
	We now motivate the definition of a Lie algebra in $\sVec_2$. In particular, suppose $L \in \sVec_2$ and $\beta : L \otimes L \to L$ is the bracket operation. We write $[x,y]$ for $\beta(x \otimes y)$. Since $\beta$ is a morphism in $\sVec_2$, it must commute with the operator $d$, and since $d_{L \otimes L} = 1 \otimes d + d \otimes 1$, we must have:
	$$ d[x,y] = [dx,y] + [x, dy] \text{ for all }x,y \in L$$
	so $d$ is a derivation over $[,]$.
	
	Then, since $c(x \otimes y) = y \otimes x + dy \otimes dx$ the antisymmetry axiom becomes:
	$$ [x,y] + [y,x] + [dy, dx] = 0 \text{ for all }x,y \in L.$$
	
	To expand out the Jacobi identity, we first see:
	\begin{align*}
		\sigma(x \otimes y \otimes z) &= (c \otimes \id ) \circ (\id \otimes c) (x \otimes y \otimes z) \\
		&= (c \otimes \id) (x \otimes z \otimes y + x \otimes dz \otimes dy) \\
		&= z \otimes x \otimes y + dz \otimes dx \otimes y + dz \otimes x \otimes dy \\
		\sigma^2(x \otimes y \otimes z) &= (\id \otimes c ) \circ (c \otimes \id) (x \otimes y \otimes z) \\
		&= (\id \otimes c) (y \otimes x \otimes z + dy \otimes dx \otimes z)) \\
		&= y \otimes z \otimes x + y \otimes dz \otimes dx + dy \otimes z \otimes dx.
	\end{align*}
	
	Then, we see that the Jacobi identity for $\sVec_2$ becomes:
	
	\begin{align*}
		[[x,y],z] &+ [[z,x],y] + [[dz,dx],y] + [[dz,x],dy]  \\ 
		&+ [[y,z],x] + [[y,dz],dx] + [[dy,z],dx] = 0 \text{ for all }x,y,z \in L.
	\end{align*}
	
	Through repeated application of the fact that $d$ is a derivation over $[,]$ and the antisymmetry rule, this identity can be shown to be equivalent to the Jacobi identity stated in definition \ref{lie alg def}.
	
	If we define $\ad_{x}: L \to L$ for $x \in L$ by $\ad_{x}(y)=[x,y]$, then we may write the Jacobi identity as:
	$$ \ad_x \circ \ad_y - \ad_y \circ \ad_x - \ad_{dy} \circ \ad_{dx} = \ad_{[x,y]}.$$
	
	The reader should note the similarity between this formulation of the Jacobi identity for $\sVec_2$ and the standard Jacobi identity for $\Vec$:
	$$ \ad_x \circ \ad_y - \ad_y \circ \ad_x = \ad_{[x,y]}.$$
	
	We will see in the next section, after the statement of PBW has been properly formulated, why the fourth condition in definition $\ref{lie alg def}$ is necessary for PBW to hold, and hence why it is included in the definition of a Lie algebra in $\sVec_2$.
	
	\subsection{Tensor, Symmetric, and Universal Enveloping Algebras in $\sVec_2$}
	
	To properly formulate the PBW theorem in the category $\sVec_2$, we first need the notions of a tensor, symmetric, and universal algebra in $\sVec_2$. Since $\sVec_2$ is a symmetric tensor category, each of these notions has a natural definition, as in [3]. 
	
	\subsubsection{Tensor Algebras}
	
	In particular, suppose $L \in \sVec_2$. Then, as usual, we define the tensor algebra $T(L)$ as:
	$$ T_n(L) = \underbrace{L\otimes\cdots\otimes L}_{n \textnormal{ times}}$$
	
	$$ T(L)=\bigoplus_{n=0}^\infty T_n(L)$$
	
	where $T_0(L) = F$.
	
	Since the operator $d_L:L \to L$ may be uniquely extended to an operator $d_T: T(L) \to T(L)$, the tensor algebra $T(L)$ is then naturally a object of $\sVec_2$. Furthermore, it forms an associative algebra in $\sVec_2$ (where the multiplication $m:T(L) \otimes T(L) \to T(L)$ is just the tensor product), but not necessarily a commutative algebra in $\sVec_2$ (i.e. a $d$-algebra). 
	
	\subsubsection{Symmetric Algebras}
	
	Then, as in any symmetric tensor category, we define the symmetric algebra $S(L)$ as the quotient of the tensor algebra by the ideal generated by the image of the morphism $\id-c$ where $c=c_{L,L}:L \otimes L \to L \otimes L$ is the commutativity map. Specifically, define the ideal $I(L)$ as:
	$$ I(L) = \langle (\id-c)(x \otimes y) \mid x,y \in L \rangle = \langle x \otimes y + y \otimes x + dy \otimes dx \mid x,y \in L \rangle $$
	where we replaced minus signs with plus signs because $\char(F)=2$. Then, we may define the symmetric algebra $S(L)$ as:
	$$ S(L) = T(L)/I(L).$$
	
	Then $S(L)$ is naturally a commutative algebra ($d$-algebra) in $\sVec_2$. To understand the structure of $S(L)$, we examine the case where $L$ is finite dimensional. The case when $L$ is infinite dimensional is completely analogous. 
	
	
	In particular, suppose $\dim(L)=m$ and $\dim(\Im(d))=k$. Then, let $\{ v_1,\ldots, v_k \}$ be an ordered basis for $\Im(d)$ and append elements $ v_{k+1},\ldots,v_m $ so that $\{ v_1,\ldots,v_m \}$ is an ordered basis for $L$. Then, a basis for $S(L)$ is all monomials of the form :
	$$ v_1^{e_1}\otimes \cdots\otimes v_k^{e_k}\otimes  v_{k+1}^{e_{k+1}}\otimes \cdots\otimes v_m^{e_m} $$
	
	such that $e_1,\ldots,e_k \in \{0,1\}, \ e_{k+1},\ldots,e_{m} \in \mathbb{N}$. 
	
	An informal explanation which can easily be made rigorous as to why this set forms a basis is that because $S(L)$ is a $d$-algebra, the elements $v_1,\ldots,v_k$ are all central in $S(L)$ and satisfy $v_i^2 = 0$. Thus, given a monomial $v_{i_1} \otimes \ldots \otimes v_{i_r}$, the terms of the form $v_i$ for $1 \leq i \leq k$ may be ``pulled" to the front of the product, and the other terms may be rearranged using the rule $v_i \otimes v_j = v_j \otimes v_i + dv_j \otimes dv_i$. Once the $v_1,\ldots,v_k$ are all pulled to the front, any $v_i, 1 \leq i \leq k$ with exponent higher that $1$ becomes $0$, so in order to form a basis, the exponents of the $v_i, 1 \leq i \leq k$ must be restricted to $0$ and $1$.
	
	
	\subsubsection{Universal Enveloping Algebras}
	
	Finally, suppose that $L$ is a Lie algebra in $\sVec_2$ with bracket operation $\beta : L \otimes L \to L$, where we write $\beta(x \otimes y) = [x,y]$. Then, as in any symmetric tensor category, we define the universal enveloping algebra $U(L)$ as the quotient of the tensor algebra by the ideal generated by the image of the morphism $\id-c-\beta$. Specifically, we define the ideal $J(L)$ as:
	$$ J(L) = \langle (\id - c - \beta )(x \otimes y) \mid x,y \in L \rangle  = \langle x \otimes y + y \otimes x + dy \otimes dx + [x,y] \mid x,y \in L \rangle.$$
	
	Then we may define the univeral enveloping algebra $U(L)$ as:
	$$ U(L) = T(L)/J(L).$$
	
	The univeral enveloping algebra is natually an associative algebra in $\sVec_2$, but need not be a $d$-algebra.
	
	$U(L)$ also inherits a natural filtration from $T(L)$, which we will denote $U_n(L)$, $n \geq 0$, where $U_n(L)$ is spanned by monomials of degree $\leq n$. Then, the associated graded algebra of $U(L)$ is as usual:
	$$ \gr U(L) =  F \oplus \left( \bigoplus_{n \geq 1} U_n(L)/U_{n-1}(L) \right).$$
	
	Since $x \otimes y + y \otimes x + dy \otimes dx = [x,y] \in U_1(L)$ for all $x,y \in L$, it follows that $x \otimes y + y \otimes x + dy \otimes dx = 0 $ in $U_2(L)/U_1(L)$, and hence the same equality holds in $\gr U(L)$. Thus, $\gr U(L)$ is a commutative algebra in $\sVec_2$ (a $d$-algebra). 
	
	There is a natural inclusion $i:L \to \gr U(L)$, and because $\gr U(L)$ is $d$-commutative, it follows that $i$ can be uniquely extended to a $d$-algebra homomorphism $\tilde{i}: S(L) \to \gr U(L)$. Then, as in any symmetic tensor category, the PBW theorem takes the form:
	
	\begin{thm}[Categorical PBW]
		The natural map $\tilde{i}:S(L) \to \gr U(L)$ is an isomorphism.
	\end{thm}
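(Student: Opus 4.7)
The plan is to establish that $\tilde{i}: S(L) \to \gr U(L)$ is an isomorphism by adapting the classical PBW strategy to the $\sVec_2$ setting. Surjectivity is automatic: $\gr U(L)$ is generated as an algebra by the image of $L$ in $U_1(L)/U_0(L)$, and since $\gr U(L)$ is $d$-commutative, the $d$-algebra extension of the inclusion $L \hookrightarrow \gr U(L)$ is precisely $\tilde{i}$, which is therefore surjective. The substantive content of the theorem is injectivity, which I would prove by exhibiting an explicit left inverse via a representation of $U(L)$ on $S(L)$.

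First fix a totally ordered basis of $L$ compatible with the $d$-structure: choose a basis $v_1,\ldots,v_k$ of $\Im(d)$, extend it to a basis of $\Ker(d)$, complete to a basis of $L$, and order with the $v_i$ first. By the description following the definition of $S(L)$, the resulting ``standard monomials'' form an $F$-basis of $S(L)$. I then define $\rho(x) \in \textnormal{End}_F(S(L))$ on basis elements $x \in L$ by recursion on the degree of standard monomials: if $x$ precedes every factor of a standard monomial $M$ in the chosen order, set $\rho(x)(M) := x \cdot M$, which is again standard (or zero when $x \in \Im(d)$ already occurs in $M$); otherwise, write $M = y \cdot M'$ with $y$ the smallest factor of $M$, and set
$$ \rho(x)(y \cdot M') := \rho(y)\rho(x)(M') + \rho(dy)\rho(dx)(M') + \rho([x,y])(M'), $$
which is well defined because $M'$ has strictly smaller degree. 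Extend $\rho$ linearly to all of $L$.

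The main obstacle, as in every PBW proof, is verifying that $\rho$ descends to an algebra map $\bar{\rho}: U(L) \to \textnormal{End}_F(S(L))$; equivalently, that for all $x,y \in L$ and all $M \in S(L)$,
$$ \rho(x)\rho(y)(M) + \rho(y)\rho(x)(M) + \rho(dy)\rho(dx)(M) + \rho([x,y])(M) = 0. $$
This is shown by induction on $\deg M$. The base case $M = 1$ reduces to the defining relation of $S(L)$ after applying antisymmetry, and in the subcase $x = y$ with $dx = 0$ it collapses exactly to the PBW axiom $[x,x] = 0$; without that axiom $L$ would fail to embed in $U(L)$, since the $U(L)$-relation already forces $[x,x] = (dx)^2 = 0$, and the recursive rewriting system would be inconsistent. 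The inductive step invokes the Jacobi identity of Definition \ref{lie alg def} to handle the commutation of $x$ and $y$ past the leading factor of $M$, together with the derivation property $d[x,y] = [dx,y]+[x,dy]$ to reconcile the $d$-correction terms that arise. Once the consistency check succeeds, the evaluation map $\pi: U(L) \to S(L)$ defined by $\pi(u) := \bar{\rho}(u)(1)$ respects the filtration and sends each ordered product of basis elements to the corresponding standard monomial in $S(L)$ modulo terms of strictly lower filtration degree; at the associated graded level $\pi$ is therefore a left inverse of $\tilde{i}$ on the standard basis, and injectivity of $\tilde{i}$ follows.
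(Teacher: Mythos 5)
Your overall strategy---building a representation $\rho$ of $L$ on $S(L)$, extending it to $U(L)$, and reading off a left inverse of $\tilde i$ by evaluating at $1$---is a legitimate and classical route to PBW, and it does differ from the paper's proof. The paper instead constructs a linear ``straightening'' operator $P:T(L)\to T(L)$ that is the identity on the span of standard monomials and kills $J(L)$, and checks its consistency by a triple induction on tensor degree, $K$-degree, and defect. Your $\pi(u)=\bar\rho(u)(1)$ and the paper's $P$ are two packagings of the same object, and in both the essential difficulty is the consistency check using the twisted Jacobi identity and the derivation axiom for $d$, so the proposal is in the right territory.

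However, there is a genuine gap in the definition of $\rho$, precisely at the point where the $\sVec_2$ setting diverges from the ordinary one. You set $\rho(x)(M):=x\cdot M$ in $S(L)$ when $x$ precedes all factors of $M$, and note parenthetically that this is zero when $x\in\Im(d)$ already occurs in $M$; i.e.\ you declare $\rho(v_i)(v_i M'')=0$ for $v_i\in\Im(d)$. But the relation of $U(L)$ coming from $x=y=w_i$ with $dw_i=v_i$, namely $w_i\otimes w_i+w_i\otimes w_i+dw_i\otimes dw_i+[w_i,w_i]\in J(L)$, forces $v_i\otimes v_i=[w_i,w_i]$ in $U(L)$, and $[w_i,w_i]$ need not vanish (the PBW axiom only grants $[x,x]=0$ when $dx=0$, and $dw_i=v_i\neq 0$). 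Consequently any algebra map $\bar\rho:U(L)\to\textnormal{End}_F(S(L))$ must satisfy $\rho(v_i)^2=\rho([w_i,w_i])$; taking $M''=1$, your definition would require $[w_i,w_i]=\rho(v_i)^2(1)=0$ in $L$, which does not follow from the axioms. Thus the map you wrote down does not satisfy the relation you need to verify, and the representation does not descend to $U(L)$. The correct reduction, which the paper's $P$ implements explicitly, is $\rho(v_i)(v_i M''):=\rho([w_i,w_i])(M'')$, and one must additionally check that this is independent of the choice of $w_i$ with $dw_i=v_i$ and that the two reductions commute when two distinct $v_i,v_j\in\Im(d)$ both appear with exponent $\geq 2$. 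That extra case analysis is essential to the proof and is absent from your proposal; without it, the argument is incorrect rather than merely incomplete. A smaller issue is the claim that the recursion for $\rho(x)(yM')$ is ``well defined because $M'$ has strictly smaller degree'': $\rho(y)$ is then applied to $\rho(x)(M')$, whose top-degree piece has the same degree as $M$, so the induction has to be supplemented with an induction on defect (or an observation that $y$ precedes every factor of that top-degree piece), exactly as the paper does.
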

	
	Since $\gr U(L)$ and $U(L)$ are isomorphic as vector spaces, an equivalent statement is that a basis of $S(L)$ is lifted to a basis of $\gr U(L)$, which is in turn a basis for $U(L)$. 
	
	Thus, suppose that $L$ is finite dimensional (once again, the infinite dimensional case is analogous). Then, let $\{ v_1,\ldots, v_k \}$ be an ordered basis for $\Im(d)$, and append elements $\{ v_{k+1}, \ldots, v_m \}$ such that $\{ v_1,\ldots, v_m \}$ is an ordered basis for $L$. 
	
	Then, we say that a monomial in $T_n(L)$ is a \textbf{standard monomial} if it is of the form:
	$$ v_1^{e_1}\otimes \cdots\otimes v_k^{e_k}\otimes  v_{k+1}^{e_{k+1}}\otimes \cdots\otimes v_m^{e_m} $$
	
	such that $e_1,\ldots,e_k \in \{0,1\}, \ e_{k+1},\ldots,e_{m} \in \mathbb{N}$. The reader should recognize this as the basis given above for $S(L)$. Then, we formula an equivalent statement to PBW for $\sVec_2$ as:
	
	\begin{thm}[PBW for $\sVec_2$]
		The set of standard monomials forms a basis for $U(L)$. Here the term ``standard monomial" is taken to mean the image of a standard momomial under the projection from $T(L)$ to $U(L)$.
	\end{thm}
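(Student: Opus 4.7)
The plan is to establish the theorem in two parts, spanning and linear independence, following the classical PBW strategy but with reduction rules tailored to $\sVec_2$. Fix the ordered basis $\{v_1, \ldots, v_m\}$ of $L$ as in the statement, and for each $i \leq k$ fix a preimage $u_i \in L$ with $d(u_i) = v_i$.

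For spanning, I would extract two reduction rules from the defining relation $x \otimes y + y \otimes x + d(y) \otimes d(x) + [x,y] = 0$ of $J(L)$. First: whenever $v_j \otimes v_i$ with $j > i$ appears as a substring of a monomial, replace it by $v_i \otimes v_j + d(v_j) \otimes d(v_i) + [v_i, v_j]$. Second: whenever $v_i \otimes v_i$ with $i \leq k$ appears, replace it by $[u_i, u_i]$; this identity follows by specializing the defining relation at $x = y = u_i$ and using $d(v_i) = 0$ together with $\char(F) = 2$, and it is independent of the choice of $u_i$ because axiom (4) together with antisymmetry force $[u,u]$ to depend only on $d(u)$. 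I would order monomials first by total degree and then lexicographically by their index sequence, observe that each reduction produces terms that are strictly smaller in this order (the correction terms $d(v_j) \otimes d(v_i)$ and $[v_i, v_j]$ drop in either degree or lex rank), and conclude by well-founded induction that every element of $U(L)$ is an $F$-linear combination of standard monomials.

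For linear independence, I would construct an action of $L$ on the vector space $V$ spanned freely over $F$ by the set of standard monomials, verify that it satisfies the defining relations of $U(L)$, and thereby promote $V$ to a $U(L)$-module. The operator $\rho(v_i)$ acts on a standard monomial $w = v_1^{e_1} \otimes \cdots \otimes v_m^{e_m}$ by $\rho(v_i)(w) = v_i \otimes w$ whenever the result is already standard, and recursively via the two reduction rules above otherwise. Once $\rho$ is shown to respect the relations of $U(L)$, the map $U(L) \to V$ sending $u \mapsto \rho(u)(1)$ carries each standard monomial in $U(L)$ to the corresponding basis element of $V$, forcing the standard monomials in $U(L)$ to be linearly independent.

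The main obstacle is the well-definedness of $\rho$, equivalent to confluence of the two reduction rules. Whenever a monomial admits two distinct applications of the rules, whether two commutations acting on overlapping adjacent pairs, a commutation overlapping with a squaring, or two squarings, both reductions must yield the same standard form. The axioms of Definition \ref{lie alg def} have been formulated precisely for this purpose: the overlap $v_\ell \otimes v_j \otimes v_i$ with $\ell > j > i$ uses the $d$-twisted Jacobi identity, the overlap $v_j \otimes v_i \otimes v_i$ with $j > i$ and $i \leq k$ uses the derivation property of $d$ over $[,]$ together with antisymmetry, and the base case of the squaring rule requires the PBW axiom $[x,x] = 0$ when $dx = 0$ to make the divided-square $[u_i, u_i]$ consistent. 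These confluence verifications are direct but lengthy expansions; once they are in hand, the theorem follows.
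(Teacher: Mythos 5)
Your blueprint matches the paper's proof in its essential structure: both parts use the same two reduction rules (adjacent transposition via the twisted commutator, and degree-lowering on $v_i^{\otimes 2}$ for $i\le k$ via $[w_i,w_i]$), spanning is proved by well-founded induction on a reduction order (the paper uses tensor degree, then ``$K$-degree,'' then defect; your degree-then-lex order also works since a transposition correction $dv_j\otimes dv_i$ replaces the leading offending index by one $\le k$, or else vanishes), and linear independence reduces to confluence of overlapping reductions. Your choice to package linear independence as a representation $\rho$ of $L$ on the free module spanned by standard monomials is a cosmetically different but equivalent reformulation of what the paper does with its projection map $P:T(L)\to T(L)$: indeed $P(u)=\rho(u)(1)$ once both are defined, and showing that $\rho$ respects the relations of $U(L)$ is precisely the statement $P(J(L))=0$ that the paper establishes. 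You have also correctly identified which axiom of Definition~\ref{lie alg def} enters at each overlap type, including that axiom~(4) is needed for the squaring reduction to be well defined.

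The genuine gap is that the confluence checks are asserted rather than carried out, and this is where essentially all of the proof's content lies. Describing them as ``direct but lengthy expansions'' understates them: the overlapping-transposition case in the paper is not a mechanical expansion but a carefully choreographed sequence of ``untwistings'' that must be arranged so that the derivation property of $d$, the twisted antisymmetry, and the twisted Jacobi identity each cancel one residual block, with the final sum collapsing exactly to the Jacobi relation applied to $(x_1,x_2,x_3)$. More importantly, appealing to the fact that the axioms ``have been formulated precisely for this purpose'' is circular in this context: the theorem is exactly the assertion that those four axioms \emph{suffice}, and that cannot be taken for granted without exhibiting the cancellation. Until the overlap computations (non-overlapping pair, overlapping pair, transposition-against-square, and square-against-square) are actually verified, the linear independence half of the theorem remains unproved.
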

	
	This has a very important corollary:
	\begin{cor}
		The natural map from $L$ to $U(L)$ is an injection.
	\end{cor}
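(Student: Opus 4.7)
The plan is to derive the injectivity of $L \to U(L)$ as a direct consequence of the PBW theorem stated just above, which asserts that the set of standard monomials forms a basis for $U(L)$. The key observation is that, under the choice of ordered basis $\{v_1,\ldots,v_m\}$ for $L$ used in the definition of standard monomials (with $\{v_1,\ldots,v_k\}$ an ordered basis of $\Im(d)$), the elements $v_1,\ldots,v_m$ themselves appear as standard monomials of degree one: each $v_j$ corresponds to the tuple of exponents with $e_j=1$ and $e_i=0$ for $i\neq j$, which satisfies the constraints $e_i\in\{0,1\}$ for $i\leq k$ and $e_i\in\mathbb{N}$ for $i>k$.

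First, I would fix such an ordered basis $\{v_1,\ldots,v_m\}$ for $L$ and let $\pi: T(L)\to U(L)$ denote the canonical projection; the natural map $L\to U(L)$ is then the restriction of $\pi$ to $L\subset T_1(L)\subset T(L)$. I would then invoke the PBW theorem to conclude that the images $\pi(v_1),\ldots,\pi(v_m)$ are part of a basis of $U(L)$ and, in particular, linearly independent over $F$.

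Given any nonzero $x\in L$, expand $x=\sum_{j=1}^m c_j v_j$ with not all $c_j$ equal to zero. Its image under the natural map is $\sum_{j=1}^m c_j \pi(v_j)$, which is nonzero by the linear independence established in the previous step. Hence the natural map $L\to U(L)$ has trivial kernel, and is therefore injective.

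There is essentially no obstacle here beyond correctly matching the degree-one standard monomials with the chosen basis of $L$; once PBW is granted, the corollary is immediate. The only subtle point worth noting is that the statement of PBW has already been arranged so that the chosen basis of $L$ is the one whose degree-one monomials are declared standard, so no independent verification of linear independence inside $U(L)$ is needed.
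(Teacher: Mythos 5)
Your argument is correct and is exactly the (implicit) justification the paper intends: PBW makes the degree-one standard monomials $\pi(v_1),\ldots,\pi(v_m)$ part of a basis of $U(L)$, hence linearly independent, so the natural map $L\to U(L)$ has trivial kernel. The paper states the corollary without a written proof, so there is no divergence in approach to report.
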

	
	Now that we have properly formulated PBW for $\sVec_2$, we can explain why the fourth condition in definition \ref{lie alg def} is necessary. In particular, suppose that $L$ satisfies the first three conditions of definition \ref{lie alg def} and that the PBW theorem holds. Then, the natural map from $L$ to $U(L)$ in an injection. 
	
	Now, suppose $x \in L$ and $dx=0$. Then, we have $[x,x]=x \otimes x + x \otimes x + dx \otimes dx + [x,x] \in J(L)$. Thus, $[x,x] \in J(L)$, so $[x,x]=0$ in $U(L)$. Since $[x,x] \in L$ and the natural map from $L$ to $U(L)$ is an injection, it follows that $[x,x]=0$.
	
	Therefore, if $L$ satisfies the first three conditions of definition \ref{lie alg def}, then $L$ must also satisfy the fourth condition in order for PBW to hold. As we will prove in the next section, these four conditions suffice to ensure that PBW holds. Thus, defintion \ref{lie alg def} is the ``right" choice for the definition of a Lie algebra in $\sVec_2$.
	
	\subsection{Proof of PBW}
	
	We now suppose that $L$ is a finite dimensional Lie algebra in $\sVec_2$. As before, $\{ v_1,\ldots,v_m \}$ is an ordered basis for $L$, where $\{ v_1,\ldots, v_k \}$ is an ordered basis for $\Im(d)$. To ease notation, we write $T_n=T_n(L)$, $T=T(L)$, $J=J(L)$, and $U=U(L)$.
	
	We now prove PBW for $\sVec_2$:
	 \begin{thm}[PBW]
	 	The set of (images of) standard monomials form a basis for universal enveloping algebra $U$.
	 \end{thm}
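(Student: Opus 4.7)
The plan is to follow the classical two-step approach to PBW: first prove that standard monomials span $U$ by a reduction argument in $T$, then prove their linear independence by constructing an action of $U$ on the $F$-vector space $V$ freely generated by the standard monomials.

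For spanning, attach to each pure monomial $v_{i_1} \otimes \cdots \otimes v_{i_n} \in T$ a complexity combining its degree with the number of inversions in its index sequence. The defining relation of $U$, rearranged as
$$ v_iv_j = v_jv_i + d(v_j)d(v_i) + [v_i,v_j] \quad (i > j),$$
converts each inversion into standard order plus two correction terms of strictly smaller degree, while applying the defining relation with $x = y = w$ (where $d(w) = v_i$ for $i \leq k$) yields $v_i^2 = [w,w] \in L$, again of strictly smaller degree. Induction on complexity then expresses every element of $U$ as a linear combination of standard monomials.

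For linear independence, define linear operators $\rho(v_i) : V \to V$ recursively. On a standard monomial $m = v_{j_1}^{e_1} \cdots v_{j_r}^{e_r}$ with $j_1 < \cdots < j_r$, declare $\rho(v_i) \cdot m$ to be the standard monomial $v_i \cdot m$ when this is already standard (the case $i < j_1$, or $i = j_1 > k$); to be $0$ when $i = j_1 \leq k$; and otherwise, in the case $i > j_1$, set
$$ \rho(v_i) \cdot m = \rho(v_{j_1}) \rho(v_i) \cdot m' + \rho(dv_{j_1}) \rho(dv_i) \cdot m' + \rho([v_i, v_{j_1}]) \cdot m',$$
where $m' = v_{j_1}^{e_1 - 1} v_{j_2}^{e_2} \cdots v_{j_r}^{e_r}$. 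This recursion terminates because $m'$ has strictly smaller degree than $m$. Extending $\rho$ multiplicatively produces an algebra homomorphism $T \to \mathrm{End}(V)$; if it descends to $U$, then evaluation at $1 \in V$ recovers each standard monomial and gives a section of the projection from standard monomials in $T$ to their images in $U$, forcing linear independence.

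The main obstacle is to verify that $\rho$ annihilates the defining relations of $U$, i.e., that
$$ \rho(v_i)\rho(v_j) + \rho(v_j)\rho(v_i) + \rho(dv_j)\rho(dv_i) + \rho([v_i,v_j]) = 0$$
as endomorphisms of $V$. By construction this identity holds on $1 \in V$; propagating it to an arbitrary standard monomial is a degree-three ambiguity analysis in which each of the four axioms of Definition \ref{lie alg def} plays a specific role: the derivation property and antisymmetry govern the rewriting of the $d$-twisted correction terms, the Jacobi identity resolves the order-three swap ambiguity when two of the three leading generators must be pushed past a third, and the PBW requirement $[x,x] = 0$ when $dx = 0$ is precisely what forces $\rho(v_i)^2 = 0$ on $V$ for $i \leq k$, consistent with the square-free convention for the first $k$ indices in the standard-monomial basis. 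The careful bookkeeping of these $d$-twisted correction terms, rather than any single conceptual difficulty, is the technical heart of the argument.
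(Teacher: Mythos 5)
Your overall strategy is the classical one and conceptually matches the paper: construct a linear projection onto standard monomials (you package it as an action $\rho$ of $T$ on the free span $V$ of standard monomials, the paper constructs an operator $P: T \to T$ directly; evaluating your $\rho$ at $1 \in V$ recovers $P$), and the technical heart is, as you say, the overlap analysis in which the twisted Jacobi identity resolves the degree-three ambiguity. However, there is a genuine error in your base case that would break the construction.

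You declare $\rho(v_i)\cdot m = 0$ when $m$ begins with $v_i$ and $i \le k$, and you attribute this to the PBW axiom ``$dx = 0 \Rightarrow [x,x]=0$'' forcing $\rho(v_i)^2 = 0$. Both halves of this are wrong, and your own spanning argument contradicts them. For $i \le k$ choose $w_i$ with $d(w_i) = v_i$; the defining relation of $J$ at $x = y = w_i$ gives (in characteristic $2$) $v_i \otimes v_i + [w_i, w_i] \in J$, so $v_i^2 = [w_i,w_i]$ in $U$, which is generally a \emph{nonzero} element of $L$. You state exactly this reduction in your spanning paragraph. Therefore the correct base case is $\rho(v_i)\cdot m = \rho([w_i,w_i])\cdot m''$ where $m = v_i \otimes m''$, and $\rho(v_i)^2$ should equal $\rho([w_i,w_i])$, not $0$. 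If you force $\rho(v_i)^2 = 0$, then $\rho$ kills the element $v_i^2 + [w_i,w_i]$ of $J$ only when $[w_i,w_i]$ itself is killed, so the map $T \to \mathrm{End}(V)$ factors through a \emph{proper quotient} of $U$ rather than $U$ itself, and evaluation at $1$ no longer separates the standard monomials you need to separate. The PBW axiom ``$[x,x]=0$ when $dx=0$'' plays a different role: it is what makes $\Ker(d)$ into an ordinary Lie algebra with alternating bracket (so that classical PBW applies to the $K$-degree-zero base case), and it makes the trivial-looking relation $[v_i,v_i] \in J$ at $x = y = v_i$ consistent with $L \hookrightarrow U$; it does not produce $v_i^2 = 0$ in $U$.

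A smaller issue: your claim that the recursion terminates ``because $m'$ has strictly smaller degree'' is incomplete. After the inner call $\rho(v_i)\cdot m'$ you must apply $\rho(v_{j_1})$ to a sum that includes monomials of the same degree as $m$; the reason this does not loop is that $j_1$ is the minimal index occurring, so on those top-degree terms $\rho(v_{j_1})$ merely prepends (or hits the corrected $i = j_1 \le k$ base case, reducing degree). This needs to be stated; the paper handles it by a triple induction on degree, $K$-degree, and defect, and your independence argument should carry the same bookkeeping that your spanning argument already uses.
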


	 \subsubsection{Span}
	 
	 We begin by proving that such monomials span $U$. To do so, we first suppose that we have a monomial $\alpha = v_{i_1}\otimes \cdots \otimes v_{i_r} \in T_r$. (The images of) such monomials clearly span $U$ as they span $T$. We define the defect of such a monomial to be the number of indices that are out of order; that is, the number of pairs $(j,j')$ such that $j>j'$ but $i_j < i_{j'}$. In addition, we define that $K-$degree of such a monomial to be the number of elements $v_{i_j}$ in the monomial such that $d(v_{i_j}) \neq 0$. We will omit tensor signs whenever convinient to save space. 
	 
	 To prove that standard monomials span, we first prove the following useful lemma:
	 
	 \begin{lem}
	 	Every monomial $\alpha$ in $T$ is equivalent modulo $J$ to a sum of monomials with ordered indices.
	 \end{lem}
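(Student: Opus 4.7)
The plan is to proceed by induction on a suitable complexity measure, reducing the number of inversions by applying the defining relation of $J$. Concretely, the relation $x \otimes y + y \otimes x + dy \otimes dx + [x,y] \in J$ rewrites, modulo $J$, as
$$ v_i \otimes v_j \equiv v_j \otimes v_i + dv_j \otimes dv_i + [v_i, v_j] \pmod{J}.$$
If I apply this to an adjacent out-of-order pair $v_{i_j} \otimes v_{i_{j+1}}$ (with $i_j > i_{j+1}$) inside a longer monomial $\beta \otimes v_{i_j} \otimes v_{i_{j+1}} \otimes \gamma$, I replace it by a sum of three pieces: the swapped monomial $\beta \otimes v_{i_{j+1}} \otimes v_{i_j} \otimes \gamma$, a ``$d$-twist'' $\beta \otimes dv_{i_{j+1}} \otimes dv_{i_j} \otimes \gamma$ in which the two factors are replaced by elements of $\Im(d)$, and a shorter monomial $\beta \otimes [v_{i_j}, v_{i_{j+1}}] \otimes \gamma$ in which the pair has collapsed to a single element of $L$.

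The key observation is that each of these pieces is strictly ``smaller'' in the lexicographic order on the triple (length, $K$-degree, defect). The swapped piece has the same length and $K$-degree but defect reduced by exactly one, since swapping an adjacent pair of inverted indices removes precisely the one inversion they form, without affecting inversions with any position outside $\{j, j+1\}$. The $d$-twist has the same length but its $K$-degree has dropped strictly: this term contributes nothing unless both $dv_{i_j}$ and $dv_{i_{j+1}}$ are nonzero, in which case both positions contributed to the original $K$-degree; after substitution they become linear combinations of $v_1,\ldots,v_k$, which lie in $\Im(d) \subseteq \Ker(d)$ and so contribute zero. The bracket piece has length strictly smaller.

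Given the lex order, the induction is straightforward. The base case is a monomial of defect $0$, which is already ordered. For the inductive step, pick any adjacent inversion and apply the relation; expand $dv_{i_j}$, $dv_{i_{j+1}}$, and $[v_{i_j}, v_{i_{j+1}}]$ in the basis $v_1,\ldots,v_m$ to obtain a sum of monomials, each strictly smaller in the lex order, and hence equivalent modulo $J$ to a sum of monomials with ordered indices by the inductive hypothesis.

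The only subtlety worth flagging, though it is not really an obstacle, is the need to choose the invariants so that the $d$-twist term truly decreases. This is where placing the basis of $\Im(d)$ at the bottom of the ordered basis and using the inclusion $\Im(d) \subseteq \Ker(d)$ proved earlier are both essential; without them, the second term could in principle have the same or larger $K$-degree and the induction would collapse. Once the monotonicity of the triple under each of the three reductions is verified, the rest of the argument is purely combinatorial and terminates because $\mathbb{N}^3$ is well-ordered lexicographically.
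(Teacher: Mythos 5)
Your proof is correct and follows the paper's argument exactly: rewrite an adjacent out-of-order pair using the relation generating $J$, and observe that the three resulting pieces strictly decrease the triple $(\text{length},\, K\text{-degree},\, \text{defect})$ in lexicographic order, so well-founded induction finishes the job. One small inaccuracy in your side remark: the $K$-degree drop for the $d$-twist term follows solely from $\Im(d)\subseteq\Ker(d)$, and the choice to put the $\Im(d)$ basis vectors at the bottom of the ordering plays no role here, since $K$-degree is insensitive to the ordering of the basis; that ordering convention only becomes relevant in the subsequent step where ordered monomials are reduced to standard ones.
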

	 
	 If the indices of $\alpha$ are not ordered, then there must exist some index $i_j$ such that $i_{j+1} < i_j$. Then, we have:
	 
	 $$ v_{i_{j}}  v_{i_{j+1}} + v_{i_{j+1}}  v_{i_{j}}  + dv_{i_{j+1}}  dv_{i_j} + [v_{i_j}, v_{i_{j+1}}] \in J.$$
	 
	 Thus,
	 \begin{align*}
	 v_{i_1} \cdots  v_{i_r} &= \big( v_{i_1}\cdots v_{i_{j-1}}  \cdot (v_{i_{j}}  v_{i_{j+1}} + v_{i_{j+1}}  v_{i_{j}} + dv_{i_{j+1}}  dv_{i_j} + [v_{i_j}, v_{i_{j+1}}])\cdot v_{i_{j+2}}\cdots v_{i_r} \big)  \\ &+ \big( v_{i_1}\cdots v_{i_{j+1}}\cdot v_{i_{j}}\cdots v_{i_r} \big) + \big(v_{i_1}\cdots v_{i_{j-1}}dv_{i_{j+1}}dv_{i_{j}} v_{i_{j+2}}\cdots v_{i_r} \big) \\ &+  \big(v_{i_1}\cdots v_{i_{j-1}}[v_{i_j},v_{i_{j+1}}] v_{i_{j+2}}\cdots v_{i_r} \big).
	 \end{align*}
	 
	 The first summand lies in $J$, the second summand has smaller defect, the third summand has lower $K-$degree, and the fourth summand has lower degree (as a tensor monomial). Hence, do induction on the degree of the tensor, then for each fixed degree do induction on the $K-$degree, and for each fixed $K-$degree do induction on defect.  Thus, modulo $J$, the monomial $\alpha$ is equivalent to the sum of the final three summands, which by induction must be equivalent to a sum of standard monomials. (Note: this ignores the base cases, which are all trivial to check)
	 
	 Hence we have shown that (the images of) monomials with ordered indices, i.e. those of the form $v_1^{e_1}\cdots v_m^{e_m}$, span $U$. To show that standard monmials span, we must show that such monomials with $e_i \in \{0,1\}$ for $1 \leq i \leq k$ must span. To do this, we pick $w_i \in L$ for $1 \leq i \leq k$ such that $d(w_i)=v_i$ (which we may do as the $v_i$ form a basis for $\Im(d)$). Then, we note that $w_i \otimes w_i + w_i \otimes w_i + dw_i \otimes dw_i + [w_i,w_i] \in J$, so $v_i \otimes v_i + [w_i,w_i] \in J$. Thus, we have $v_i^2 \equiv [w_i,w_i] \pmod J$  for $1 \leq i \leq k$. Thus, if $1 \leq i \leq k$ and $e_i \geq 2$, then the term $v_i^{e_i}$ may be reduced modulo $J$ to a tensor of lower degree. Then we do induction on the degree of the tensor to show that standard monomials must span.

	 \subsubsection{Linear Independence}
	 
	 We now prove that standard monomials are linearly independent in $U$. This is the tougher assertion. To do this, we first suppose that there exists some linear map $P:T \to T$ such that $P$ acts as the identity on standard monomials, and furthermore, whenever $i_{j}\geq i_{j+1}$, we have:
	 \begin{align*}
	 P(v_1\cdots v_{i_j}v_{i_{j+1}}\cdots v_{i_n}) &= P(v_1\cdots v_{i_{j+1}}v_{i_{j}}\cdots v_{i_n}) \\
	 &+ P(v_1\cdots dv_{i_{j+1}}dv_{i_{j}}\cdots v_{i_n}) \\
	 &+ P(v_1\cdots [v_{i_{j}},v_{i_{j+1}}]\cdots v_{i_n}).
	 \end{align*}
	 
	 If we can show that such a $P$ exists, then we must have $P(J)=0$, whereas $P$ acts as the identity on linear combinations of standard monomials. Thus, if we let $S$ denote the span of standard monomials, then the existence of such a $P$ would imply that $S \cap J = \{ 0 \}$, so no nontrivial linear combination of standard monomials is zero in $U$, hence such monomials must be linearly independent. 
	 
	 We note that if such a $P$ existed, then it would satisfy 
	 
	 \begin{align*}
	 P(y_1\cdots y_{i_j}y_{i_{j+1}}\cdots y_{i_n}) &= P(y_1\cdots y_{i_{j+1}}y_{i_{j}}\cdots y_{i_n}) \\
	 &+ P(y_1\cdots dy_{i_{j+1}}dy_{i_{j}}\cdots y_{i_n}) \\
	 &+ P(y_1\cdots [y_{i_{j}},y_{i_{j+1}}]\cdots y_{i_n})
	 \end{align*}
	 
	 for \emph{any} $y_{i_j}$ in $L$. This is because we may expand each $y_{i_j}$ out in terms of the basis $\{ v_i : 1 \leq i \leq m \}$, at which point the linearity of $P$ expresses such an identity in terms of basis elements.
	 
	 Hence the problem reduces to showing that such a $P$ exists, which then reduces to defining $P$ on monomials. We do induction on the degree $n$ of the monomial. We define $P$ as the identity on $T_0$ and $T_1$, in which case the first condition on $P$ holds since every monomial in $T_0$ and $T_1$ are standard, and the second condition holds vacuously.
	 
	 Next, we suppose suppose $n \geq 2$, and we assume for induction that $P$ is well defined on $\bigotimes_{r=0}^{n-1}T_r \subset T$. Pick a monomial $\alpha =  v_{i_1}\otimes \cdots \otimes v_{i_n} \in T_n$.
	 
	 We remark that if the $K$-degree of $\alpha$ is zero, then $\alpha$ is an element of $T(\Ker(d))$, the tensor algebra of $\Ker(d)$. Since $\Ker(d)$ is a standard lie algebra with the property that $[x,x]=0$ for $x \in \Ker(d)$, the usual proof of PBW shows that there must exist a well-defined map $P_K : T(\Ker(d)) \to T(\Ker(d))$ satisyfying the desired properties. Thus, for $\alpha \in T(\Ker(d))$, we \emph{define} $P(\alpha)$ by $P(\alpha)=P_K(\alpha)$. Hence, $P$ is well defined for the case where the $K$-degree of $\alpha$ is zero.
	 
	 Furthemore, when the defect of $\alpha$ is zero, $\alpha$ is of the form $v_1^{e_1}\cdots v_m^{e_m}$. If $\alpha$ is standard then we define $P(\alpha)=\alpha$. If $\alpha$ is not standard, then we must have $e_i \geq 2$ for some $0 \leq i \leq k$. Then, we define  $P(\alpha)$ by:
	 
	 \begin{align*}
	 P(\alpha) = P(v_1^{e_1} \cdots v_i^2 v_i^{e_i-2} \cdots v_m^{e_m}) := P(v_1^{e_1} \cdots [w_i,w_i] v_i^{e_i-2} \cdots v_m^{e_m})
	 \end{align*}
	 
	 where $w_i$ is chosen such that $d(w_i)=v_i$.
	 
	 The term $v_1^{e_1} \cdots [w_i,w_i] v_i^{e_i-2} \cdots v_m^{e_m}$ has lower degree, so by induction, $P$ has already been defined on it. A small amount of extra work must be put in to ensure that this definition of $P$ for monomials of defect zero is well defined. That is, in the event that we have two distinct $1 \leq i,j \leq k$ (say, $i < j$) such that $e_i,e_{j} \geq 2$, we must show that:
	 $$ P(v_1^{e_1} \cdots [w_i,w_i] v_i^{e_i-2} \cdots v_m^{e_m}) = P(v_1^{e_1} \cdots [w_{j},w_j] v_j^{e_j-2} \cdots v_m^{e_m})$$
	 
	 To do this, we simply note that since $P$ is already defined on tensors of degree less than that of $\alpha$, and furthermore on such tensors it satisfies the properties it is meant to satisfy, we thus have:
	 
	 \begin{align*}
	 P(v_1^{e_1} \cdots [w_i,w_i] v_i^{e_i-2} \cdots v_m^{e_m}) & & & &
	 \end{align*}
	 \begin{align*}
	 &= P(v_1^{e_1} \cdots [w_i,w_i] v_i^{e_i-2} \cdots v_j^{e_j} \cdots v_m^{e_m}) \\
	 &= P(v_1^{e_1} \cdots [w_i,w_i] v_i^{e_i-2} \cdots (v_jv_j)v_j^{e_j-2} \cdots v_m^{e_m})  \\
	 & \ \ \ \ + 	P(v_1^{e_1} \cdots [w_i,w_i] v_i^{e_i-2} \cdots (w_jw_j + w_jw_j + v_jv_j + [w_j,w_j])v_j^{e_j-2} \cdots v_m^{e_m}) \\
	 &= P(v_1^{e_1} \cdots [w_i,w_i] v_i^{e_i-2} \cdots [w_j,w_j]v_j^{e_j-2} \cdots v_m^{e_m})
	 \end{align*}
	 
	 It is easy to verify that the same expression is achieved if we begin with $P(v_1^{e_1} \cdots [w_j,w_j] v_j^{e_j-2} \cdots v_m^{e_m})$, thus $P$ is well defined on monomials of defect zero.
	 
	 Thus, we have given an unambiguous definition of $P(\alpha)$ when either $\alpha$ is of degree $\leq 1$, $\alpha$ has $K$-degree $0$, or $\alpha$ has defect $0$. These three cases with serve as the base cases for induction. In particular, by assuming that $P$ is defined for monomials of degree less than $\alpha$, we have used induction on the degree of the monomial. All that remains is to define $P(\alpha)$ for the case when the defect of $\alpha$ and the $K$-degree of $\alpha$ are both nonzero. To do this, we do induction on both the $K$-degree and the defect. In particular, we do induction on $K$-degree, and for each fixed $K$-degree, we do induction on defect. Thus, we assume that $P$ has already been defined for monomials of smaller $K$-degree than $\alpha$, and furthermore we assume that $P$ has already been defined for monomials with the same $K$-degree as $\alpha$ but lower defect. We now attempt to define $P(\alpha)$ in terms of how $P$ acts on the monomials we have thus already defined it on.
	 
	 In particular, since the defect of $\alpha$ is nonzero, there must be some $i_j$ such that $ i_j > i_{j+1}$, where we recall that $\alpha = v_{i_1}\cdots v_{i_j}v_{i_{j+1}} \cdots v_{i_n}$. For this case, we \emph{define} $P(\alpha)$ to be:

	 $$ P(v_{i_1} \cdots v_{i_{j+1}}v_{i_j} \cdots v_{i_n}) + P(v_{i_1} \cdots dv_{i_{j+1}}dv_{i_j} \cdots v_{i_n}) + P(v_{i_1}\cdots [v_{i_j},v_{i_{j+1}}] \cdots v_{i_n}).$$
	 
	 \ \\
	 The reasons that we may make this definition are:
	 
	 \begin{enumerate}
	 	\item The first summand has already been defined since it has the same $K$-degree but lower defect.
	 	\item The second summand has already been defined since it has lower $K$-degree unless either $v_{i_j}$ or $v_{i_{j+1}}$ is already in $\Ker(d)$, in which case the term is zero anyway).
	 	\item The third summand has already been defined since it has lower (tensor) degree.
	 \end{enumerate}
	 
	 Hence, all that remains is to show that this definition of $P$ is unambiguous; that is, if there are two indices $j,j'$ such that $i_j > i_{j+1}$ and $i_{j'} > i_{j'+1}$, then we must show that the two possible definitions of $P(\alpha)$ given above (the one above and the one obtained by replacing $j$ with $j'$) are equal.
	 
	 There are thus two cases to consider. The first is when the two pairs $(j,j+1)$ and $(j', j'+1)$ do not overlap (w.l.o.g. $j'>j+1$), and second is when the two pairs do overlap (w.l.o.g. $j'=j+1$). 
	 
	 \textbf{Case I:}
	 
	 This is the case where the pairs $(j,j+1)$ and $(j', j'+1)$ do not overlap, where we assume without loss of generality that $j' > j+1$. For convinience, we write $x_1=v_{i_j}$, $x_2 = v_{i_{j+1}}$, $x_3 = v_{i_{j'}}$, and $x_4 = v_{i_{j'+1}}$. Nothing is lost in the following calculation if we replace the monomial $\alpha = v_{i_1} \cdots x_1  x_2 \cdots x_3 x_4 \cdots v_{i_n}$ with the monomial $\alpha' = x_1  x_2  x_3  x_4$. Thus, to show that $P(\alpha')$ is well defined, we must show that the expression:
	 $$ P(x_2x_1x_3x_4) + P(dx_2dx_1x_3x_4) + P([x_1,x_2]x_3x_4)$$
	 is equal to the expression:
	 $$ P(x_1x_2x_4x_3) + P(x_1x_2dx_4dx_3) + P(x_1x_2[x_3,x_4]).$$
	 
	 To show that these expressions are equal, we remark that through the defining properties of $P$, we can rewrite the three summands of the first expression as:
	 
	 \begin{align*}
	 P(x_2x_1x_3x_4)  &= \ P(x_2x_1x_4x_3) &+  P&(x_2x_1dx_4dx_3) &+ P&(x_2x_1[x_3,x_4]) \\ 
	 P(dx_2dx_1x_3x_4) &= \  P(dx_2dx_1x_4x_3) &+ P&(dx_2dx_1dx_4dx_3) &+ P&(dx_2dx_1[x_3,x_4]) \\
	 P([x_1,x_2]x_3x_4) &= \ P([x_1,x_2]x_4x_3) &+ P&([x_1,x_2]dx_4dx_3) &+ P&([x_1,x_2][x_3,x_4]).
	 \end{align*}
	 
	 Likewise, we can rewrite the three summands of the second expression as:
	 
	 \begin{align*}
	 P(x_1x_2x_4x_3)  &= \ P(x_2x_1x_4x_3) &+  P&(dx_2dx_1x_4x_3) &+ P&([x_1,x_2]x_4x_3) \\ 
	 P(x_1x_2dx_4dx_3)  &= \ P(x_2x_1dx_4dx_3) &+  P&(dx_2dx_1dx_4dx_3) &+ P&([x_1,x_2]dx_4dx_3) \\ 
	 P(x_1x_2[x_3,x_4])  &= \ P(x_2x_1[x_3,x_4]) &+  P&(dx_2dx_1[x_3,x_4]) &+ P&([x_1,x_2][x_3,x_4]). \\ 
	 \end{align*}
	 
	 From here, it is easy to see that the two expressions for $P(\alpha')$ must be equal (individual terms can be matched up).
	 
	 \textbf{Case II:}
	 
	 This is the case where the pairs $(j,j+1)$ and $(j',j'+1)$ do overlap, where we assume without loss of generality that $j+1=j'$. For convinience, we write $x_1=v_{i_j}$, $x_2 = v_{i_{j+1}}$, and $x_3 = v_{i_{j+2}}$. Nothing is lost in the following calculation is we replace $\alpha = v_{i_1} \cdots x_1  x_2  x_3 \cdots v_{i_n}$ with the monomial $\alpha' = x_1 x_2 x_3$. Thus, to show that $P(\alpha')$ is well defined, we must show that the expression:
	 $$ A:= P(x_2x_1x_3) + P(dx_2dx_1x_3) + P([x_1,x_2]x_3)$$
	 
	 is equal to the expression:
	 
	 $$B:= P(x_1x_3x_2) + P(x_1dx_3dx_2) + P(x_1[x_2,x_3]).$$
	 
	 We now make some rearragnements to $A$ and $B$, and we note that such rearrangements come straight from the defintion of $P$ and the fact that $P$ is well defined on the terms that we wish to rearrange since they all have either a lower defect, a lower $K$-degree, or a lower tensor degree than $\alpha'$.
	 
	 We first ``rearrange" the first term of $A$ and of $B$ so that they equal $P(x_3x_2x_1)$:
	 
	 \begin{align*}
	 A = P(x_2x_1x_3) &+ P(dx_2dx_1x_3) + P([x_1,x_2]x_3) \\
	 = P(x_2x_3x_1) &+ P(x_2dx_3dx_1) + P(x_2[x_1,x_3]) \\
	 &+ P(dx_2dx_1x_3) + P([x_1,x_2]x_3) \\
	 = P(x_3x_2x_1) &+ P(dx_3dx_2x_1) + P([x_2,x_3]x_1) \\
	 &+ P(x_2dx_3dx_1) + P(x_2[x_1,x_3]) \\
	 &+ P(dx_2dx_1x_3) + P([x_1,x_2]x_3) \\
	 \end{align*}
	 
	 and:
	 
	 \begin{align*}
	 B = P(x_1x_3x_2) &+ P(x_1dx_3dx_2) + P(x_1[x_2,x_3]) \\
	 = P(x_3x_1x_2) &+ P(dx_3dx_1x_2) + P([x_1,x_3]x_2) \\
	 &+ P(x_1dx_3dx_2) + P(x_1[x_2,x_3]) \\
	 = P(x_3x_2x_1) &+ P(x_3dx_2dx_1) + P(x_3[x_1,x_2]) \\
	 &+ P(dx_3dx_1x_2) + P([x_1,x_3]x_2) \\
	 &+ P(x_1dx_3dx_2) + P(x_1[x_2,x_3]).
	 \end{align*}
	 
	 We can now see that the first term of both $A$ and $B$ is $P(x_3x_2x_1)$, so it suffices to show that the second and third columns in the summation expressions for $A$ and $B$ sum to the same value. In order to accomplish this, we will first ``rearrange" the third column of the summation expression for $A$ to get its terms to match up with that of $B$. In particular:
	 
	 \begin{align*}
	 P([x_2,x_3]x_1) = P(x_1[x_2,x_3]) + P(dx_1d[x_2,x_3]) + P([[x_2,x_3],x_1]) \\
	 P(x_2[x_1,x_3]) = P([x_1,x_3]x_2) + P(d[x_1,x_3]dx_2) + P([x_2,[x_1,x_3]]) \\
	 P([x_1,x_2]x_3) = P(x_3[x_1,x_2]) + P(dx_3d[x_1,x_2]) + P([[x_1,x_2],x_3]).
	 \end{align*}
	 
	 Furthermore, we can apply the same process to the second column of $A$ to get the terms to match up with those of $B$, however we note that for this column, the first row of $A$ matches up with the third row of $B$, and vise versa, and furthermore, for each rearrangement, two ``swaps" are necessary.
	 
	 \begin{align*}
	 P(dx_3dx_2x_1) &= P(dx_3x_1dx_2) + P(dx_3[dx_2,x_1]) \\
	 &= P(x_1dx_3dx_2) + P([dx_3,x_1]dx_2) + P(dx_3[dx_2,x_1]) \\
	 P(x_2dx_3dx_1) &= P(dx_3x_2dx_1) + P([x_2,dx_3]dx_1) \\
	 &= P(dx_3dx_1x_2) + P(dx_3[x_2,dx_1]) + P([x_2,dx_3]dx_1) \\
	 P(dx_2dx_1x_3) &= P(dx_2x_3dx_1) + P(dx_2[dx_1,x_3]) \\
	 &= P(x_3dx_2dx_1) + P([dx_2,x_3]dx_1) + P(dx_2[dx_1,x_3]).
	 \end{align*}
	 
	 Hence, we may substitute the previous two series of rearrangements into the summation expressions for $A$ and $B$ and then add the two together (remembering that we are in characteristic $2$) to get:
	 $$ A+B = $$
	 \begin{align*}
	 P([dx_3,x_1]dx_2) + P(dx_3[dx_2,x_1]) + P(dx_1d[x_2,x_3]) + P([[x_2,x_3],x_1]) \\
	 + P(dx_3[x_2,dx_1]) + P([x_2,dx_3]dx_1) + P(d[x_1,x_3]dx_2) + P([x_2,[x_1,x_3]]) \\
	 + P([dx_2,x_3]dx_1) + P(dx_2[dx_1,x_3]) + P(dx_3d[x_1,x_2]) + P([[x_1,x_2],x_3])
	 \end{align*}
	 $$ = $$
	 \begin{align*}
	 P(dx_3d[x_1,x_2]) + P(dx_3[dx_2,x_1]) + P(dx_3[x_2,dx_1]) \\
	 + P(dx_1d[x_2,x_3]) + P([dx_2,x_3]dx_1) + P([x_2,dx_3]dx_1) \\
	 + P(d[x_1,x_3]dx_2) + P([dx_3,x_1]dx_2) + P(dx_2[dx_1,x_3]) \\
	 + P([[x_2,x_3],x_1]) + P([x_2,[x_1,x_3]] + P([[x_1,x_2],x_3]).
	 \end{align*}
	 
	 We now handle each line individually. They are each easily simplified when we remember that $d$ is a derivation over $[,]$.
	 
	 The first line is straightforward:
	 
	 \begin{align*}
	 &P(dx_3d[x_1,x_2]) + P(dx_3[dx_2,x_1]) + P(dx_3[x_2,dx_1]) \\
	 &= P(dx_3(d[x_1,x_2] + [dx_2,x_1] + [x_2,dx_1])) \\
	 &= P(dx_2(d[x_1,x_2] + d[x_2,x_1])) = 0.
	 \end{align*}
	 
	 Here the final equality follows from the fact that $d[x_1,x_2] + d[x_2,x_1] = d([x_1,x_2] + [x_2,x_1]) = d([dx_2,dx_1]) = 0$, which uses the twisted antisymmetry rule $[x,y] + [y,x] + [dy,dx] = 0$.
	 
	 For the second line, we first ``untwist" the first term:
	 $$ P(dx_1d[x_2,x_3]) = P(d[x_2,x_3]dx_1) + P([dx_1,d[x_2,x_3]]).$$
	 
	 Thus we have:
	 \begin{align*}
	 &P(dx_1d[x_2,x_3]) + P([dx_2,x_3]dx_1) + P([x_2,dx_3]dx_1) \\
	 &= P(d[x_2,x_3]dx_1) + P([dx_1,d[x_2,x_3]]) + P([dx_2,x_3]dx_1) + P([x_2,dx_3]dx_1) \\
	 &= P((d[x_2,x_3] + [dx_2,x_3] + [x_2,dx_3])dx_1) + P([dx_1,d[x_2,x_3]]) \\
	 &= P((d[x_2,x_3] + d[x_2,x_3])dx_1) + P([dx_1,d[x_2,x_3]]) \\
	 &= P([dx_1,d[x_2,x_3]]).
	 \end{align*}
	 
	 Finally, for the second line, we need to ``untwist" the third term:
	 
	 $$ P(dx_2[dx_1,x_3]) = P([dx_1,x_3]dx_2) + P([dx_2,[dx_1,x_3]]).$$
	 
	 Thus,
	 
	 \begin{align*}
	 &P(d[x_1,x_3]dx_2) + P([dx_3,x_1]dx_2) + P(dx_2[dx_1,x_3]) \\
	 &= P(d[x_1,x_3]dx_2) + P([dx_3,x_1]dx_2) + P([dx_1,x_3]dx_2) + P([dx_2,[dx_1,x_3]]) \\
	 &= P((d[x_1,x_3] + [dx_3,x_1] + [x_3,dx_1])dx_2) + P([dx_2,[dx_1,x_3]]) \\
	 &= P((d[x_1,x_3] + d[x_3,x_1])dx_2) + P([dx_2,[dx_1,x_3]]) \\
	 &= P([dx_2,[dx_1,x_3]]) .
	 \end{align*}
	 
	 Putting this all together, we get:
	 
	 \begin{align*}
	 A + &B = \\
	 P([dx_1,d[x_2,x_3]]) &+ P([dx_2,[dx_1,x_3]])  \\
	 + P([[x_2,x_3],x_1]) + P([x_2,[&x_1,x_3]]) + P([[x_1,x_2],x_3]).
	 \end{align*}
	 
	 We note, however, that the twisted antisymmetry rule tells us that $[[x_2,x_3],x_1] = [x_1,[x_2,x_3]] + [dx_1,d[x_2,x_3]]$. Appying $P$ to this equation and substituting the result in gives us:
	 
	 \begin{align*}
	 A + &B = \\
	 P([dx_1,d[x_2,x_3]]) &+ P([dx_2,[dx_1,x_3]])  \\
	 + P([x_1,[x_2,x_3]]) + P([dx_1,d[x_2,x_3]]) &+ P([x_2,[x_1,x_3]]) + P([[x_1,x_2],x_3]) \\
	 = P([x_1,[x_2,x_3]]) + P([x_2,[x_1,x_3]]) +& P([dx_2,[dx_1,x_3]]) + P([[x_1,x_2],x_3]) \\
	 = P([x_1,[x_2,x_3]] + [x_2,[x_1,x_3]] +& [dx_2,[dx_1,x_3]] + [[x_1,x_2],x_3]).
	 \end{align*}
	 
	 Finally, we can recognize that if we substitute $x=x_1$, $y=x_2$, and $z=x_3$ into the twisted jacobi identity at the start, we get:
	 $$ [x_1,[x_2,x_3]] + [x_2,[x_1,x_3]] + [dx_2,[dx_1,x_3]] + [[x_1,x_2],x_3] = 0.$$
	 
	 If we apply $P$ to this identity, we get $A+B=0$, and consequently $A=B$. Thus, $P$ is well defined for this case as well.
	 
	 \section{Acknowledgements}
	 
	 I would first and foremost like to thank the MIT PRIMES program for facilitating this research project. In addition, I would like to thank my mentor Lucas Mason-Brown for countless invaluable meetings and guidance in my research, as well as Professor Pavel Etingof for suggesting the project and providing advice and help throughout the project (Theorems 2.23 and 2.26 are effectively due to him). Finally, I would like to thank Dr. Tanya Khovanova for her feedback on the drafts of this paper. This project could not have happened without the contributions of each of the aforementioned people. 
	 
	 \section{Bibliography}
	 
	 [1] Dummit, David Steven., and Richard Foote M. Abstract Algebra. Abstract Algebra. Hoboken,
	 NJ: Wiley, 2004.
	 
	 [2] Etingof, Pavel. Koszul duality and the PBW theorem in symmetric tensor categories in positive characteristic. Nov 2016. URL \url{https://arxiv.org/abs/1603.08133}
	 
	 [3] Etingof, P. I., Shlomo Gelaki, Dmitri Nikshych, and Victor Ostrik. Tensor categories. Mathematical Surveys and Monographs, 205, 2015. URL \url{http://www-math.mit.edu/~etingof/egnobookfinal.pdf}.
	 
	 [4] Mitchell, Steve. The Jacobson Radical. 04 2015. URL \url{https://sites.math.washington.edu/~mitchell/Algh/jac.pdf}.
	 
	 [5] Venkatesh, Siddharth. Hilbert Basis Theorem and Finite Generation of Invariants in Symmetric
	 Fusion Categories in Positive Characteristic. July 2015. URL \url{https://arxiv.org/abs/1507.05142}.

\end{document}